\theoremstyle{thmstyleone}%
\newtheorem{theorem}{Theorem}
\theoremstyle{thmstyletwo}%
\newtheorem{example}{Example}%
\theoremstyle{thmstylethree}%
\newtheorem{lemma}{Lemma}
\newtheorem{corollary}{Corollary}
\newtheorem{assumption}{Assumption}
\DeclareMathOperator{\Ls}{L}
\DeclareMathOperator{\LLs}{\mathbf L}
\DeclareMathOperator{\Hs}{H}
\DeclareMathOperator{\Cs}{C}
\DeclareMathOperator{\Ds}{D}
\DeclareMathOperator{\Us}{U}
\DeclareMathOperator{\Vs}{V}
\newcommand{\norm}[1]{\left\lVert{#1}\right\rVert}
\newcommand{\inprod}[1]{\left\langle{#1}\right\rangle}
\newcommand{\vertiii}[1]{{\left\vert\kern-0.25ex\left\vert\kern-0.25ex\left\vert #1 
    \right\vert\kern-0.25ex\right\vert\kern-0.25ex\right\vert}}
\newcommand{\bm}[1]{\boldsymbol{#1}}
\newcommand{\fa}{\forall}
\newcommand{\xb}{\bm{x}}
\newcommand{\dx}{\,\mathrm{d}\xb}
\newcommand{\dt}{\,\mathrm{d}t}
\newcommand{\q}{\quad}
\newcommand{\qqq}{\qquad\quad}
\newcommand{\qqqq}{\qquad\qquad}
\begin{document}

\title[Error analysis of a space-time finite element method]
{Improved a priori error estimates for a space-time finite element method for parabolic problems}

\author*[1]{\fnm{Thi Thanh Mai} \sur{Ta}}\email{mai.tathithanh@hust.edu.vn}

\author[1]{\fnm{Quang Huy} \sur{Nguyen}}\email{huy.nguyenquang1@hust.edu.vn}

\author[1]{\fnm{Phi Hung} \sur{Pham}}\email{hung.pp216836@sis.hust.edu.vn}

\affil[1]{\orgdiv{Faculty of Mathematics and Informatics}, \orgname{Hanoi University of Science and Technology}, \orgaddress{\street{Dai Co Viet}, \city{Hanoi}, \postcode{11657}, \country{Viet Nam}}}


\abstract{In this paper, we employ a space-time finite element method to discretize the parabolic initial-boundary value problem and extend its error analysis with refined estimates on unstructured space-time meshes. We establish higher-order estimates in three different norms, thereby supplementing existing research. Moreover, we obtain an optimal estimate in a norm stronger than that of the trial space. Finally, we present numerical examples to illustrate our theoretical results.}


\keywords{Parabolic initial-boundary value problems $\cdot$ Space-time finite element method $\cdot$ A priori error estimates}


\pacs[MSC Classification]{35K20 $\cdot$ 65M15 $\cdot$ 65M60}

\maketitle

\section{Introduction}

Let $\Omega$ be a bounded, Lipschitz domain in $\mathbb{R}^n\ \left(n=1,2,3\right)$ with boundary $\partial\Omega$ and let $\left(0,T\right)$ be a bounded time interval, where $T>0$. The space-time cylinder is denoted by $\Omega_T:= \Omega\times \left(0, T\right)$. This paper considers the following parabolic initial-boundary value problem
\begin{equation}
    \label{eq: PDE problem}
    \begin{cases}
        \partial_t u - \nabla \cdot\left(\bm{A} \nabla u\right) = f \qqq & \text{in} \q \Omega_T,\\
        u = 0 \qqq  & \text{on} \q \partial\Omega \times \left(0, T\right),\\
        u (\cdot, 0) = 0 \qqq & \text{on} \q \Omega,
    \end{cases}
\end{equation}
where $\bm{A} = \bm{A}^{\top}\in \mathbb{L}^\infty\left(\Omega_T\right)$ is a given uniformly positive definite matrix, and $f\in \Ls^2\left(\left(0,T\right), \Hs^{-1}\left(\Omega\right)\right)$ represents the source term. Problems of this form arise in various physical and industrial applications, including heat conduction, population dispersion, and polymer thermal diffusivity.

In the past 50 years, space-time finite element methods have emerged as one of the most efficient approaches for solving Problem \eqref{eq: PDE problem} on fully unstructured space-time meshes \cite{HH1988, Neumuller2013, BVZ2017}. Unlike the time-stepping schemes or the time-discontinuous Galerkin methods, this approach treats the time variable as an additional spatial variable and the temporal derivative of the solution as a convection term along the time direction. In this way, space and time can be discretized simultaneously, allowing us to enhance the rapid development of parallel computations for tackling complex problems, such as optimal control \cite{LST+2021}, electromagnetics \cite{GGS2024}, and moving-subdomains problems \cite{NLPT2024}. 

Regarding the error analysis of space-time finite element methods, let us mention some recent works \cite{Steinbach2015, LMN2016, Moore2018, SW2021}. In \cite{Steinbach2015}, Steinbach proposed a conforming space-time Galerkin approximation for Problem \eqref{eq: PDE problem} and established an error estimate in a discrete trial space norm. Subsequently, the theoretical analysis of an upwind-stabilized space-time method for the heat equation was studied in \cite{LMN2016, Moore2018}. The authors ended up with a priori error estimates with respect to discrete test space norms. Moreover, utilizing the least square formulation of parabolic initial-boundary value problems, Stevenson and Westerdiep arrived at a stable approximation and a quasi-optimal error estimate in the trial space norm \cite{SW2021}. However, these studies primarily focused on errors measured in specific norms on the trial or test space, lacking results with other miscellaneous norms of higher order as in the literature on classical time discretizations \cite{Thomee2006}.

Supplementing the work of Steinbach \cite{Steinbach2015}, this paper analyzes the error of the space-time finite element method in four different norms. To improve the convergence order of this method and derive the desired error estimates, we extend the application of duality arguments from conforming approximations of elliptic problems \cite{Nitsche1968} to that of Problem \eqref{eq: PDE problem}. We obtain higher-order estimates in the $\Ls^2\left(\Omega\right)$-norm at $t=T$ and the $\Ls^2\left(\Omega_T\right)$-norm, which is weaker than the norm considered in \cite{Steinbach2015}. Moreover, with respect to a norm stronger than that of the trial space, specifically the $\Hs^1\left(\Omega_T\right)$-norm, we establish an optimal error estimate. The convergence order can be further improved when employing a negative-order norm. To the best of our knowledge, these error estimates have not yet been explored in the numerical analysis literature on space-time finite element methods.

The manuscript is structured into four sections. In the \hyperref[sec: variational formulation and its discretization]{next} section, we present the Petrov-Galerkin variational formulation of Problem \eqref{eq: PDE problem} and its space-time discretization, along with the approximability of the $\Ls^2$-orthogonal projection. In Section \ref{sec: a priori error estimates}, we establish and prove the improved error estimates. Subsequently, Section \ref{sec: numerical results} presents numerical experiments that validate the theoretical findings. Finally, we discuss perspectives and potential directions for future research.

\section{Variational formulation and its discretization}
\label{sec: variational formulation and its discretization}

Throughout this work, we use standard notations for Lebesgue spaces, Sobolev spaces, and Bochner spaces, along with their associated norms. For further details, see \cite[Appendix B.1]{EG2004} and \cite[Section 64.1]{Ern2021c}, for example. The symbol $C>0$ denotes a generic constant that is independent of the solution $u$ and the mesh size $h$ but may depend on the space-time cylinder $\Omega_T$ and the matrix $\bm{A}$. Its value may vary across different contexts.

Define $\Vs := \Ls^2\left(\left(0,T\right), \Hs_0^1\left(\Omega\right)\right)$ for convenience and endow this space with an equivalent norm
$$
\norm{y}_{\Vs}^2 := \int\limits_0^T \int\limits_\Omega \left(\bm{A}\nabla y\right)\cdot\nabla y \dx \dt \qqqq \forall y \in \Vs.
$$
This equivalence follows from the uniformly positive definiteness of $\bm{A}$ and the Poincar{\' e}–Steklov inequality 
\begin{equation}
    \label{eq: compare L2 and the Y norm}
    \norm{y}_{\Ls^2\left(\Omega_T\right)}\le C\norm{\nabla y}_{\LLs^2\left(\Omega_T\right)}\qqqq \forall y\in \Vs,
\end{equation}
see \cite[Lemma B.61]{EG2004}. The notation $\Vs^\prime$ stands for the dual space of $\Vs$. We denote by $\inprod{\cdot, \cdot}_{\Vs^\prime  \times \Vs}$ the duality pairing between $\Vs^\prime$ and $\Vs$. Let us introduce the spaces 
$$
\Us := \left\{y \in \Vs \mid \partial_t y \in \Vs^\prime \right\}\qqq \text{and}\qqq \Us_0 := \left\{y \in \Us \mid y(\cdot, 0) = 0 \right\},
$$
equipped with the graph norm 
$$
\norm{y}^2_{\Us} := \norm{y}_{\Vs}^2 + \norm{\partial_t y}_{\Vs^\prime }^2 \qqqq \forall y \in \Us.
$$
In $\Us$, the trace operator $y\in \Us\rightarrow y\left(\cdot,t\right)\in \Ls^2\left(\Omega\right)$ is bounded for almost every $t\in \left[0,T\right]$. Moreover, we recall from \cite[Lemma 64.40]{Ern2021c} the following inequality
\begin{equation}
    \label{eq: time trace inequality}
    \sup_{t\in \left[0,T\right]} \norm{y\left(\cdot,t\right)}_{\Ls^2\left(\Omega\right)}\le C\norm{y}_{\Us}\qqqq \forall y\in \Us.
\end{equation} 

Given a source term $f\in \Vs^\prime$, the Petrov-Galerkin variational formulation of Problem \eqref{eq: PDE problem} is stated as follows: Find $u\in \Us_0$ such that
\begin{equation}
    \label{eq: variational formulation}
    a\left(u, v\right) = \inprod{f, v}_{\Vs^\prime  \times \Vs} \qqqq \forall v \in \Vs,
\end{equation}
where the bilinear form $a: \Us_0\times \Vs \to \mathbb{R}$ is defined as
$$
a\left(u, v\right) := \inprod{\partial_t u, v}_{\Vs^\prime  \times \Vs} + \int\limits_0^T \int\limits_\Omega \left(\bm{A}\nabla u\right) \cdot \nabla v \dx\dt \qqqq \forall \left(u,v\right)\in \Us_0\times \Vs.
$$
By the Banach-Ne{\v c}as-Babu{\v s}ka theorem \cite[Theorem 2.6]{EG2004}, the well-posedness of this problem follows from the boundedness of $a\left(\cdot,\cdot\right)$, the inf-sup condition \cite[Theorem 2.1]{Steinbach2015}, as well as a result in \cite[Section 2]{LST+2021}. 

Let $\Omega$ be a polyhedron in $\mathbb{R}^n\ \left(n=1,2,3\right)$. Consider a family of quasi-uniform meshes $\left\{\mathcal{T}_h\right\}_{h\in \left(0,h^\ast\right)}$ of the cylinder $\Omega_T = \Omega\times \left(0,T\right)$, with mesh size $h\in \left(0,h^\ast\right)$, where $h^\ast>0$ is a fixed constant. We define the finite element space as
$$
\Us_h :=\left\{\varphi_h \in \Cs\left(\overline{\Omega_T}\right)\mid \varphi_{h\, \mid \, K}\in \mathbb{P}_k\left(K\right) \text{ for all } K\in \mathcal{T}_h\right\} \cap \Us_0.
$$
Here, for $k\in \mathbb{N}^\ast$ and $K\in \mathcal{T}_h$, let $\mathbb{P}_k\left(K\right)$ denote the space of polynomials of degree $k$ on $K$. The discrete counterpart of Problem \eqref{eq: variational formulation} is given by: Find $u_h\in \Us_h$ such that
\begin{equation}
    \label{eq: discrete variational formulation}
    a\left(u_h, v_h\right) = \inprod{f, v_h}_{\Vs^\prime  \times \Vs} \qqqq \forall v_h \in \Us_h.
\end{equation}
From \cite[Theorem 3.1]{Steinbach2015}, it follows that the bilinear form $a\left(\cdot,\cdot\right)$ satisfies the discrete stability condition
\begin{equation}
    \label{eq: discrete stability estimate}
    \sup_{v_h \in \Us_{h}\setminus \{0\}} \dfrac{a\left(u_h, v_h\right)}{\norm{v_h}_{\Vs}} \ge \dfrac{1}{2\sqrt{2}}\norm{u_h}_{h} \qqqq \fa u_h \in \Us_{h},
\end{equation}
where $\norm{\cdot}_h$ is a mesh-dependent norm on $\Us_0$, defined as
$$
\norm{y}_h^2 := \norm{y}_{\Vs}^2 + \norm{q_h\left(y\right)}_{\Vs}^2 \qqqq\forall y\in \Us_0,
$$
and $q_h\left(y\right)\in \Us_h$ is the unique solution to the problem
$$
\int\limits_0^T \int\limits_\Omega \left[\bm{A}\nabla q_h\left(y\right)\right]\cdot \nabla \phi_h \dx\dt = \inprod{\partial_t y, \phi_h}_{\Vs^\prime  \times \Vs} \qqqq \forall \phi_h \in \Us_h.
$$
Using \eqref{eq: discrete stability estimate} and the discrete Banach-Ne{\v c}as-Babu{\v s}ka theorem \cite[Theorem 2.22]{EG2004}, we conclude the well-posedness of Problem \eqref{eq: discrete variational formulation}. Moreover, the following Galerkin orthogonality holds
\begin{equation}
    \label{eq: Galerkin orthogonal}
    a\left(u-u_h,v_h\right)=0 \qqqq \forall v_h \in \Us_h.
\end{equation}

We end this section by discussing the approximability of the projection operator $\Pi_h: \Ls^1\left(\Omega_T\right) \to \Us_h$, which is defined by
$$
\int\limits_0^T \int\limits_\Omega \left(\Pi_h y - y\right) \varphi_h \dx\dt = 0 \qqqq \forall \varphi_h \in \Us_h,
$$
for all $y\in \Ls^1\left(\Omega_T\right)$. The following lemma follows from \cite[Remark 12.17, Section 22.5]{Ern2021}.

\begin{lemma}
    \label{lem: projection error}
    For all $y\in \Hs^1\left(\Omega_T\right)$, the operator $\Pi_h$ satisfies the inequality
    $$
    \norm{\left(y-y_h\right)\left(\cdot,T\right)}_{\Ls^2\left(\Omega\right)} \le C \sqrt{h}\norm{\Ds y}_{\LLs^2\left(\Omega_T\right)}.
    $$
    Moreover, for all $\eta\in \left[1, k+1\right]$ and $y\in \Hs^\eta\left(\Omega_T\right)$, the following estimate holds
    $$
    \norm{y - \Pi_h y}_{\Ls^2\left(\Omega_T\right)} + h\norm{\Ds\left(y- \Pi_h y\right)}_{\LLs^2\left(\Omega_T\right)} \le C h^\eta\norm{y}_{\Hs^\eta\left(\Omega_T\right)}.
    $$
    Here, $\Ds:= \left(\nabla, \partial_t\right)^\top$ denotes the space-time gradient operator.
\end{lemma}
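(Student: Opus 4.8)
The plan is to establish the two assertions separately, dealing first with the bulk approximation estimate and then bootstrapping it to the trace estimate on the top face $\Omega\times\{T\}$; throughout, in the first inequality $y_h$ is understood to be $\Pi_h y$. The structural facts used are that $\Pi_h$ is the $\Ls^2(\Omega_T)$-orthogonal projection onto $\Us_h$, hence the best $\Ls^2(\Omega_T)$-approximation in $\Us_h$, and that $\{\TT_h\}$ is quasi-uniform, so that inverse inequalities are at our disposal.

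For the second inequality, I would fix a Scott--Zhang quasi-interpolant $I_h$ mapping $\Hs^1(\Omega_T)$ into $\Us_h$ that respects the homogeneous initial and lateral-boundary conditions built into $\Us_h$ and reproduces $\mathbb P_k$ on each element patch. The best-approximation property gives $\norm{y-\Pi_h y}_{\Ls^2(\Omega_T)}\le\norm{y-I_h y}_{\Ls^2(\Omega_T)}$, and the usual Bramble--Hilbert argument on each $K\in\TT_h$, together with quasi-uniformity (and real interpolation between the endpoints $\eta=1$ and $\eta=k+1$ for non-integer $\eta$), yields $\norm{y-I_h y}_{\Ls^2(\Omega_T)}\le Ch^\eta\norm{y}_{\Hs^\eta(\Omega_T)}$. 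For the derivative term I would write $\Ds(y-\Pi_h y)=\Ds(y-I_h y)+\Ds\Pi_h(I_h y-y)$, using $\Pi_h I_h y=I_h y$, and invoke the $\Hs^1$-stability of the $\Ls^2$-projection on quasi-uniform meshes, $\norm{\Ds\Pi_h z}_{\LLs^2(\Omega_T)}\le C\norm{\Ds z}_{\LLs^2(\Omega_T)}$, applied to $z=I_h y-y$; combined with $\norm{\Ds(y-I_h y)}_{\LLs^2(\Omega_T)}\le Ch^{\eta-1}\norm{y}_{\Hs^\eta(\Omega_T)}$ this produces $h\norm{\Ds(y-\Pi_h y)}_{\LLs^2(\Omega_T)}\le Ch^\eta\norm{y}_{\Hs^\eta(\Omega_T)}$, as claimed. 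For $\eta=1$ the Bramble--Hilbert and Poincar\'e steps can be phrased with the $\Hs^1$-seminorm alone, so that the corresponding right-hand sides become $Ch\norm{\Ds y}_{\LLs^2(\Omega_T)}$ and $C\norm{\Ds y}_{\LLs^2(\Omega_T)}$, respectively.

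For the first inequality I would use a scaled trace inequality on each element $K$ whose closure meets $\Omega\times\{T\}$: mapping to a reference simplex gives $\norm{w}_{\Ls^2(\pa K\cap\{t=T\})}^2\le C\big(h_K^{-1}\norm{w}_{\Ls^2(K)}^2+h_K\norm{\Ds w}_{\LLs^2(K)}^2\big)$ for every $w\in\Hs^1(K)$. Taking $w=y-\Pi_h y$, summing over the relevant elements (whose top faces tile $\Omega\times\{T\}$), and using $h_K\simeq h$ gives $\norm{(y-\Pi_h y)(\cdot,T)}_{\Ls^2(\Omega)}^2\le C\big(h^{-1}\norm{y-\Pi_h y}_{\Ls^2(\Omega_T)}^2+h\norm{\Ds(y-\Pi_h y)}_{\LLs^2(\Omega_T)}^2\big)$; inserting the $\eta=1$ versions of the bulk estimates from the previous step makes both terms of size $h\norm{\Ds y}_{\LLs^2(\Omega_T)}^2$, and taking square roots finishes the proof.

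The only non-routine ingredient --- and the one genuinely forcing quasi-uniformity --- is the $\Hs^1$-stability of the $\Ls^2$-orthogonal projection onto the constrained space $\Us_h$; its proof is classical (Crouzeix--Thom{\'e}e / Bramble--Pasciak--Steinbach type arguments, using a global inverse inequality together with the $\Ls^2$-stability of $\Pi_h$), but a little delicate because of the boundary and initial constraints, so I would simply cite \cite{Ern2021} for it. The remaining pieces --- the element-wise trace inequality, the inverse estimate, and the Bramble--Hilbert bounds --- are entirely standard, so the proof then reduces to bookkeeping the powers of $h$ so that the trace contribution ends up scaling like $\sqrt h$.
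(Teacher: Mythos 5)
Your proof takes essentially the same route as the paper, whose "proof" is simply a citation of Ern--Guermond: the multiplicative (scaled) trace inequality on the elements meeting $\Omega\times\{T\}$ for the first bound, and the best-approximation property of the $\Ls^2$-orthogonal projection combined with its $\Hs^1$-stability on quasi-uniform meshes for the second, with the powers of $h$ bookkept correctly. One caveat your argument shares with the paper's citation: the quasi-interpolation estimate into the constrained space $\Us_h$ --- and indeed the stated lemma itself --- implicitly requires $y$ to be compatible with the essential lateral-boundary and initial conditions built into $\Us_h$ (for $y\equiv 1$ the best $\Ls^2$ approximation error from $\Us_h$ is only of order $\sqrt{h}$), which is satisfied by every function to which the lemma is applied in the paper but not by a general $y\in\Hs^\eta\left(\Omega_T\right)$.
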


\section{A priori error estimates}
\label{sec: a priori error estimates}

Let $u\in \Us_0$ and $u_h\in \Us_h$ be the solutions to Problems \eqref{eq: variational formulation} and \eqref{eq: discrete variational formulation}, respectively. In this section, we estimate the error $u-u_h$ in various norms. We begin by recalling an error estimate in the norm $\norm{\cdot}_h$. For the proof when $k=1$, see \cite[Theorem 3.3, Corollary 3.4]{Steinbach2015}. The result for $k>1$ follows similarly.

\begin{lemma}
    \label{lem: error estimate h-norm}
    Let $u\in \Us_0$ and $u_h\in \Us_h$ be the solutions to Problems \eqref{eq: variational formulation} and \eqref{eq: discrete variational formulation}, respectively. For $s\in \left[1,k+1\right]$, if $u\in \Hs^s\left(\Omega_T\right)$, then we have
    $$
    \norm{u-u_h}_h \le C h^{s-1}\norm{u}_{\Hs^s\left(\Omega_T\right)}.
    $$
\end{lemma}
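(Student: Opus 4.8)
\emph{Proof proposal.} The plan is a quasi-best-approximation argument resting on the discrete stability estimate \eqref{eq: discrete stability estimate}, the Galerkin orthogonality \eqref{eq: Galerkin orthogonal}, and the approximation properties of $\Pi_h$ from Lemma~\ref{lem: projection error}. First I would establish that $a$ is bounded \emph{with respect to the same mesh-dependent norm} that appears on the right of \eqref{eq: discrete stability estimate}, namely $\abs{a\left(y,v_h\right)}\le\sqrt2\,\norm{y}_h\norm{v_h}_{\Vs}$ for all $y\in\Us_0$ and $v_h\in\Us_h$. The elliptic term is controlled by $\norm{y}_{\Vs}\norm{v_h}_{\Vs}$ via Cauchy--Schwarz, and for the temporal term I would insert the defining identity of $q_h\left(y\right)$ with test function $\phi_h=v_h$, giving $\inprod{\partial_t y,v_h}_{\Vs^\prime\times\Vs}=\int_0^T\int_\Omega\left[\bm{A}\nabla q_h\left(y\right)\right]\cdot\nabla v_h\dx\dt\le\norm{q_h\left(y\right)}_{\Vs}\norm{v_h}_{\Vs}$; summing the two bounds and using $\alpha+\beta\le\sqrt2\sqrt{\alpha^2+\beta^2}$ yields the claim.

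Next I would run the C\'ea-type step. For any $v_h\in\Us_h$ the element $u_h-v_h$ lies in $\Us_h$, so combining \eqref{eq: discrete stability estimate}, the identity $a\left(u_h-v_h,w_h\right)=a\left(u-v_h,w_h\right)$ coming from \eqref{eq: Galerkin orthogonal}, and the boundedness just proved gives
$$\frac{1}{2\sqrt2}\norm{u_h-v_h}_h\le\sup_{w_h\in\Us_h\setminus\{0\}}\frac{a\left(u-v_h,w_h\right)}{\norm{w_h}_{\Vs}}\le\sqrt2\,\norm{u-v_h}_h,$$
hence $\norm{u_h-v_h}_h\le4\norm{u-v_h}_h$, and the triangle inequality produces $\norm{u-u_h}_h\le5\inf_{v_h\in\Us_h}\norm{u-v_h}_h$.

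It then remains to bound the best-approximation error, and I would take $v_h=\Pi_hu\in\Us_h$ and estimate $\norm{u-\Pi_hu}_h^2=\norm{u-\Pi_hu}_{\Vs}^2+\norm{q_h\left(u-\Pi_hu\right)}_{\Vs}^2$ term by term. By uniform positive definiteness of $\bm{A}$, the first term is $\leqc\norm{\nabla\left(u-\Pi_hu\right)}_{\LLs^2\left(\Omega_T\right)}\leqc\norm{\Ds\left(u-\Pi_hu\right)}_{\LLs^2\left(\Omega_T\right)}$. For the second term, testing the definition of $q_h$ with $\phi_h=q_h\left(u-\Pi_hu\right)$ and applying Cauchy--Schwarz on the duality pairing shows $\norm{q_h\left(u-\Pi_hu\right)}_{\Vs}\le\norm{\partial_t\left(u-\Pi_hu\right)}_{\Vs^\prime}$; since $u\in\Hs^s\left(\Omega_T\right)$ with $s\ge1$ we have $\partial_t\left(u-\Pi_hu\right)\in\Ls^2\left(\Omega_T\right)$, so the Poincar\'e--Steklov inequality \eqref{eq: compare L2 and the Y norm}, applied to the $\Ls^2$-representation of the pairing, bounds it by $\leqc\norm{\partial_t\left(u-\Pi_hu\right)}_{\Ls^2\left(\Omega_T\right)}\leqc\norm{\Ds\left(u-\Pi_hu\right)}_{\LLs^2\left(\Omega_T\right)}$. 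Both terms are thus dominated by $\norm{\Ds\left(u-\Pi_hu\right)}_{\LLs^2\left(\Omega_T\right)}$, and the second estimate of Lemma~\ref{lem: projection error} with $\eta=s$ bounds this by $Ch^{s-1}\norm{u}_{\Hs^s\left(\Omega_T\right)}$, which closes the argument.

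The hard part is not computational but structural: one must notice that \eqref{eq: discrete stability estimate} controls $\norm{\cdot}_h$ rather than $\norm{\cdot}_{\Vs}$, so for the C\'ea estimate to close the boundedness of $a$ has to be proved against that very mesh-dependent norm in the first slot, and the reformulation of the temporal pairing through $q_h$ is precisely the device that makes the two compatible. Once that is in place the remaining steps are routine, the only mild point being that $\partial_t u\in\Ls^2\left(\Omega_T\right)$ whenever $s\ge1$, which is what allows the $\Vs^\prime$-norm to be absorbed via \eqref{eq: compare L2 and the Y norm}.
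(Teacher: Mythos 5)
Your proposal is correct and follows essentially the same route as the paper, which defers to \cite[Theorem 3.3, Corollary 3.4]{Steinbach2015}: discrete inf-sup stability in $\norm{\cdot}_h$, boundedness of $a$ against that same mesh-dependent norm via the reformulation of $\inprod{\partial_t y, v_h}_{\Vs^\prime\times\Vs}$ through $q_h\left(y\right)$, a C\'ea-type quasi-optimality bound, and the projection estimates of Lemma~\ref{lem: projection error}. No gaps.
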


The primary objective of this work is to establish higher-order error estimates beyond the bound in Lemma \ref{lem: error estimate h-norm}. To achieve this, we employ duality arguments \cite{Nitsche1968}. For any $g\in \Ls^2\left(\Omega_T\right)$ and $z_T\in \Hs^1_0\left(\Omega\right)$, it follows from \cite[Corollary 2.3]{Steinbach2015} and \cite[Proposition 1]{MS2020} that the following problem
\begin{equation}
    \label{eq: Aubin-Nitshe assumption}
    \begin{cases}
        -\inprod{\partial_t z, w}_{\Vs^\prime  \times \Vs} + \displaystyle\int\limits_0^T \int\limits_\Omega \left(\bm{A}\nabla z\right)\cdot\nabla w\dx\dt = \displaystyle\int\limits_0^T \int\limits_\Omega g w\dx\dt \qqq \forall w\in \Vs, \\
        z\left(\cdot,T\right) = z_T 
    \end{cases}
\end{equation}
admits a solution $z\in \Us$, which further satisfies $z\in \Hs^1\left(\Omega_T\right)$ along with the stability estimate
\begin{equation}
    \label{eq: regularity estimate}
    \norm{z}_{\Hs^1\left(\Omega_T\right)}\le C\left(\norm{g}_{\Ls^2\left(\Omega_T\right)} + \norm{z_T}_{\Ls^2\left(\Omega\right)}\right).
\end{equation}

Next, we choose $w= u-u_h \in \Us_0$ in \eqref{eq: Aubin-Nitshe assumption}, integrate by parts with $z\in \Us$ and $u-u_h\in \Us_0$, and apply \eqref{eq: Galerkin orthogonal} to get
$$
\int\limits_0^T\int\limits_\Omega g \left(u-u_h\right)\dx\dt + \int\limits_\Omega z_T\left(u-u_h\right)\left(\xb, T\right)\dx = a\left(u-u_h, z\right) = a\left(u-u_h, e\right),
$$
where we define $e:= z - \Pi_h z$. Integrating by parts once more with $u-u_h\in \Us_0$ and $ e\in \Hs^{1}\left(\Omega_T\right)$, and subsequently applying \eqref{eq: compare L2 and the Y norm}, we obtain
$$
\begin{aligned}
    &a\left(u-u_h, e\right)= \\
    &= \int\limits_\Omega \left(u-u_h\right)\left(\xb, T\right) e\left(\xb, T\right)\dx + \int\limits_0^T\int\limits_\Omega -\left(u-u_h\right)\left(\partial_t e\right) +  \left[\bm{A} \nabla \left(u-u_h\right) \right]\cdot \nabla e\dx \dt\\
    &\le \norm{\left(u-u_h\right)\left(\cdot, T\right)}_{\Ls^2\left(\Omega\right)}\norm{ e\left(\cdot, T\right)}_{\Ls^2\left(\Omega\right)} + C\norm{\nabla\left(u-u_h\right)}_{\LLs^2\left(\Omega_T\right)}\norm{\Ds e}_{\LLs^2\left(\Omega_T\right)}.
\end{aligned}
$$
Thus, we end up with the following inequality
\begin{equation}
    \label{eq: key estimate}
    \begin{aligned}
        &\int\limits_0^T\int\limits_\Omega g \left(u-u_h\right)\dx\dt + \int\limits_\Omega z_T\left(u-u_h\right)\left(\xb, T\right)\dx \le\\
        &\le \norm{\left(u-u_h\right)\left(\cdot, T\right)}_{\Ls^2\left(\Omega\right)}\norm{ e\left(\cdot, T\right)}_{\Ls^2\left(\Omega\right)} + C\norm{\nabla\left(u-u_h\right)}_{\LLs^2\left(\Omega_T\right)}\norm{\Ds e}_{\LLs^2\left(\Omega_T\right)},
    \end{aligned}
\end{equation}
for all $g\in \Ls^2\left(\Omega_T\right)$ and $z_T\in \Hs^1_0\left(\Omega\right)$. This inequality plays a fundamental role in the subsequent error analysis. To bound the right-hand side of \eqref{eq: key estimate}, we introduce the following assumption:

\begin{assumption}
    \label{assump: Aubin-Nitshe assumption}
    Assume that the solution $z\in \Us$ to Problem \eqref{eq: Aubin-Nitshe assumption} possesses additional regularity, namely $z\in \Hs^2\left(\Omega_T\right)$.
\end{assumption}

This assumption was previously employed in \cite{LMN2016, Moore2018} and validated in \cite[Remark 3.3]{LST+2021b}. We are now ready to establish our main theoretical results. First, we derive the $\Ls^2\left(\Omega\right)$-norm error estimate at $t=T$. In the following theorem, let $z_1$ be the solution to Problem \eqref{eq: Aubin-Nitshe assumption} when $g=0$ and $z_T=\norm{\left(u-u_h\right)\left(\cdot, T\right)}^{-1}_{\Ls^2\left(\Omega\right)}\left(u-u_h\right)\left(\cdot,T\right)$.

\begin{theorem}
    \label{theo: error estimate L2 T norm}
    Let $u\in \Us_0$ and $u_h\in \Us_h$ be the solutions to Problems \eqref{eq: variational formulation} and \eqref{eq: discrete variational formulation}, respectively. Suppose that $u\in \Hs^s\left(\Omega_T\right)$ for some $s\in \left[1,k+1\right]$ and that Assumption \ref{assump: Aubin-Nitshe assumption} is fullfilled. Then, we can find $h^\ast>0$ such that for all $h\in \left(0,h^\ast\right)$, the following estimate holds
    $$
    \norm{\left(u-u_h\right)\left(\cdot, T\right)}_{\Ls^2\left(\Omega\right)} \le C h^{s} \norm{u}_{\Hs^s\left(\Omega_T\right)}\norm{z_1}_{\Hs^2\left(\Omega_T\right)}.
    $$
\end{theorem}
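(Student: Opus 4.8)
The plan is to read off the claimed bound from the key inequality \eqref{eq: key estimate} with the data $g=0$ and $z_T=\norm{\left(u-u_h\right)\left(\cdot,T\right)}_{\Ls^2\left(\Omega\right)}^{-1}\left(u-u_h\right)\left(\cdot,T\right)$ that is singled out in the statement (we may assume the error at $t=T$ does not vanish, otherwise there is nothing to prove). With this choice the solution of \eqref{eq: Aubin-Nitshe assumption} is $z_1$, so $e=z_1-\Pi_h z_1$, and the left-hand side of \eqref{eq: key estimate} collapses to exactly $\norm{\left(u-u_h\right)\left(\cdot,T\right)}_{\Ls^2\left(\Omega\right)}$; hence
$$
\norm{\left(u-u_h\right)\left(\cdot,T\right)}_{\Ls^2\left(\Omega\right)}\le \norm{\left(u-u_h\right)\left(\cdot,T\right)}_{\Ls^2\left(\Omega\right)}\norm{e\left(\cdot,T\right)}_{\Ls^2\left(\Omega\right)}+C\norm{\nabla\left(u-u_h\right)}_{\LLs^2\left(\Omega_T\right)}\norm{\Ds e}_{\LLs^2\left(\Omega_T\right)}.
$$

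The next step is to make the first term on the right absorbable into the left. Applying the first inequality of Lemma \ref{lem: projection error} to $y=z_1\in\Hs^1\left(\Omega_T\right)$ gives $\norm{e\left(\cdot,T\right)}_{\Ls^2\left(\Omega\right)}\le C\sqrt{h}\,\norm{\Ds z_1}_{\LLs^2\left(\Omega_T\right)}\le C\sqrt{h}\,\norm{z_1}_{\Hs^1\left(\Omega_T\right)}$, and since $z_T$ has unit $\Ls^2\left(\Omega\right)$-norm, the stability estimate \eqref{eq: regularity estimate} (with $g=0$) bounds $\norm{z_1}_{\Hs^1\left(\Omega_T\right)}$ by a constant independent of $h$. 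Thus $\norm{e\left(\cdot,T\right)}_{\Ls^2\left(\Omega\right)}\le C\sqrt{h}$, and choosing $h^\ast$ so small that $C\sqrt{h}\le 1/2$ for all $h\in\left(0,h^\ast\right)$, we move that term to the left to obtain
$$
\tfrac12\norm{\left(u-u_h\right)\left(\cdot,T\right)}_{\Ls^2\left(\Omega\right)}\le C\norm{\nabla\left(u-u_h\right)}_{\LLs^2\left(\Omega_T\right)}\norm{\Ds e}_{\LLs^2\left(\Omega_T\right)}.
$$

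It then remains only to estimate the two factors on the right. Since $\norm{\cdot}_{\Vs}$ is equivalent to $\norm{\nabla\cdot}_{\LLs^2\left(\Omega_T\right)}$ and $\norm{\cdot}_{\Vs}\le\norm{\cdot}_h$, Lemma \ref{lem: error estimate h-norm} gives $\norm{\nabla\left(u-u_h\right)}_{\LLs^2\left(\Omega_T\right)}\le C\norm{u-u_h}_h\le Ch^{s-1}\norm{u}_{\Hs^s\left(\Omega_T\right)}$. For the other factor I use the second inequality of Lemma \ref{lem: projection error} with $\eta=2$ — admissible since $k\ge1$, so $2\in\left[1,k+1\right]$ — together with Assumption \ref{assump: Aubin-Nitshe assumption}, which yields $z_1\in\Hs^2\left(\Omega_T\right)$: $\norm{\Ds e}_{\LLs^2\left(\Omega_T\right)}=\norm{\Ds\left(z_1-\Pi_h z_1\right)}_{\LLs^2\left(\Omega_T\right)}\le Ch\,\norm{z_1}_{\Hs^2\left(\Omega_T\right)}$. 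Multiplying the two bounds produces $\norm{\left(u-u_h\right)\left(\cdot,T\right)}_{\Ls^2\left(\Omega\right)}\le Ch^{s}\norm{u}_{\Hs^s\left(\Omega_T\right)}\norm{z_1}_{\Hs^2\left(\Omega_T\right)}$, as claimed.

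The one genuinely delicate point is the absorption step, and it is exactly here that the half-power gain in the trace estimate of Lemma \ref{lem: projection error} is used: without the factor $\sqrt{h}$ in $\norm{e\left(\cdot,T\right)}_{\Ls^2\left(\Omega\right)}$ one would merely get $\norm{\left(u-u_h\right)\left(\cdot,T\right)}_{\Ls^2\left(\Omega\right)}\le C\norm{\left(u-u_h\right)\left(\cdot,T\right)}_{\Ls^2\left(\Omega\right)}+\dots$, which is vacuous, and it is the built-in normalization of $z_T$ that makes the coefficient of $\norm{\left(u-u_h\right)\left(\cdot,T\right)}_{\Ls^2\left(\Omega\right)}$ on the right uniformly small in $h$ via \eqref{eq: regularity estimate}; everything else is bookkeeping with Lemmas \ref{lem: projection error} and \ref{lem: error estimate h-norm}. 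A minor technical caveat is that \eqref{eq: Aubin-Nitshe assumption} is invoked with $z_T=\norm{\left(u-u_h\right)\left(\cdot,T\right)}^{-1}_{\Ls^2\left(\Omega\right)}\left(u-u_h\right)\left(\cdot,T\right)$, so one needs this datum to lie in $\Hs_0^1\left(\Omega\right)$; this holds under the assumed regularity of $u$, while $u_h\left(\cdot,T\right)\in\Hs_0^1\left(\Omega\right)$ automatically by conformity.
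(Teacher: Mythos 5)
Your proof is correct and follows essentially the same route as the paper: the same choice of dual data $g=0$, $z_T=\norm{\left(u-u_h\right)\left(\cdot,T\right)}^{-1}_{\Ls^2\left(\Omega\right)}\left(u-u_h\right)\left(\cdot,T\right)$ in \eqref{eq: key estimate}, absorption of the trace term via the $\sqrt{h}$ estimate of Lemma \ref{lem: projection error} together with \eqref{eq: regularity estimate}, and the product bound $Ch^{s-1}\cdot Ch$ from Lemmas \ref{lem: error estimate h-norm} and \ref{lem: projection error} (with $\eta=2$ under Assumption \ref{assump: Aubin-Nitshe assumption}). The only differences are cosmetic (you absorb before estimating the remaining product, the paper does it in the reverse order), and your closing caveat about $z_T$ needing to lie in $\Hs^1_0\left(\Omega\right)$ is a fair observation that the paper itself leaves implicit.
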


\begin{proof}
    By substituting $g=0$ and $z_T=\norm{\left(u-u_h\right)\left(\cdot, T\right)}^{-1}_{\Ls^2\left(\Omega\right)}\left(u-u_h\right)\left(\cdot,T\right)$ into \eqref{eq: key estimate}, and then invoking Lemmas \ref{lem: projection error} and \ref{lem: error estimate h-norm}, we obtain
    $$
    \begin{aligned}
        & \norm{\left(u-u_h\right)\left(\cdot, T\right)}_{\Ls^2\left(\Omega\right)} \le \\
        &\le \norm{\left(u-u_h\right)\left(\cdot, T\right)}_{\Ls^2\left(\Omega\right)}\norm{e_1\left(\cdot, T\right)}_{\Ls^2\left(\Omega\right)} + C h^{s-1} \norm{u}_{\Hs^s\left(\Omega_T\right)}h\norm{z_1}_{\Hs^2\left(\Omega_T\right)},
    \end{aligned}
    $$
    where $e_1 = z_1 - \Pi_h z_1$. Next, we apply Lemma \ref{lem: projection error} once more and use \eqref{eq: regularity estimate} to get
    $$
    \norm{e_1\left(\cdot, T\right)}_{\Ls^2\left(\Omega\right)} \le C \sqrt{h}\norm{\Ds z_1}_{\LLs^2\left(\Omega_T\right)} \le C \sqrt{h}.
    $$
    Thus, we arrive at the inequality
    $$
    \left(1- C \sqrt{h}\right)\norm{\left(u-u_h\right)\left(\cdot, T\right)}_{\Ls^2\left(\Omega\right)} \le C h^s \norm{u}_{\Hs^s\left(\Omega_T\right)}\norm{z_1}_{\Hs^2\left(\Omega_T\right)}.
    $$
    Since $h\in \left(0,h^\ast \right)$ for some $h^\ast>0$, we complete the proof by choosing $h^\ast$ such that $1- C \sqrt{h} > 1- C \sqrt{h^\ast} \ge \frac{1}{2}$.
\end{proof}

We now derive a higher-order error estimate in the $\Ls^2\left(\Omega_T\right)$-norm. From \eqref{eq: compare L2 and the Y norm}, we observe that this norm is weaker than $\norm{\cdot}_{\Vs}$, and consequently, weaker than $\norm{\cdot}_{h}$. Such an estimate is essential for the error analysis of optimal control and eigenvalue problems. In the following theorem, we consider Problem \eqref{eq: Aubin-Nitshe assumption} with $g=\norm{u-u_h}^{-1}_{\Ls^2\left(\Omega_T\right)}\left(u-u_h\right)$ and $z_T=0$, and denote by $z_2$ its corresponding solution.
    
\begin{theorem}
    \label{theo: error estimate L2 Q norm}
    Let $u\in \Us_0$ and $u_h\in \Us_h$ be the solutions to Problems \eqref{eq: variational formulation} and \eqref{eq: discrete variational formulation}, respectively. Suppose that $u\in \Hs^s\left(\Omega_T\right)$ for some $s\in \left[1,k+1\right]$ and that Assumption \ref{assump: Aubin-Nitshe assumption} is satisfied. Then, there exists $h^\ast>0$ such that for all $h\in \left(0,h^\ast\right)$, we have the following estimate
    $$
    \norm{u-u_h}_{\Ls^2\left(\Omega_T\right)} \le C h^s \norm{u}_{\Hs^s\left(\Omega_T\right)}\left(\norm{z_1}_{\Hs^2\left(\Omega_T\right)} + \norm{z_2}_{\Hs^2\left(\Omega_T\right)}\right).
    $$
\end{theorem}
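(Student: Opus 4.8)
The plan is to specialize the key inequality \eqref{eq: key estimate} to the dual data $g = \norm{u-u_h}^{-1}_{\Ls^2\left(\Omega_T\right)}\left(u-u_h\right)$ and $z_T = 0$, so that the corresponding solution of \eqref{eq: Aubin-Nitshe assumption} is exactly $z_2$ (if $u = u_h$ there is nothing to prove, so we may assume $u \ne u_h$). With this choice the second term on the left-hand side of \eqref{eq: key estimate} vanishes, while the first term equals $\norm{u-u_h}_{\Ls^2\left(\Omega_T\right)}$. Setting $e_2 := z_2 - \Pi_h z_2$, inequality \eqref{eq: key estimate} becomes
$$
\begin{aligned}
\norm{u-u_h}_{\Ls^2\left(\Omega_T\right)} &\le \norm{\left(u-u_h\right)\left(\cdot, T\right)}_{\Ls^2\left(\Omega\right)} \norm{e_2\left(\cdot, T\right)}_{\Ls^2\left(\Omega\right)} \\
&\q + C \norm{\nabla\left(u-u_h\right)}_{\LLs^2\left(\Omega_T\right)} \norm{\Ds e_2}_{\LLs^2\left(\Omega_T\right)}.
\end{aligned}
$$

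Next I would estimate each of the four factors on the right. For the first product: Theorem \ref{theo: error estimate L2 T norm} gives $\norm{\left(u-u_h\right)\left(\cdot, T\right)}_{\Ls^2\left(\Omega\right)} \le C h^s \norm{u}_{\Hs^s\left(\Omega_T\right)} \norm{z_1}_{\Hs^2\left(\Omega_T\right)}$, and the first bound of Lemma \ref{lem: projection error} combined with the stability estimate \eqref{eq: regularity estimate} (note $\norm{g}_{\Ls^2\left(\Omega_T\right)} = 1$ and $z_T = 0$) gives $\norm{e_2\left(\cdot, T\right)}_{\Ls^2\left(\Omega\right)} \le C \sqrt{h}\, \norm{\Ds z_2}_{\LLs^2\left(\Omega_T\right)} \le C\sqrt{h}$. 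For the second product: by the definition of $\norm{\cdot}_h$ and the norm equivalence induced by the uniform positive definiteness of $\bm{A}$ we have $\norm{\nabla\left(u-u_h\right)}_{\LLs^2\left(\Omega_T\right)} \le C \norm{u-u_h}_{\Vs} \le C \norm{u-u_h}_h$, which Lemma \ref{lem: error estimate h-norm} bounds by $C h^{s-1} \norm{u}_{\Hs^s\left(\Omega_T\right)}$, whereas the $\eta = 2$ case of the second bound of Lemma \ref{lem: projection error} gives $\norm{\Ds e_2}_{\LLs^2\left(\Omega_T\right)} \le C h \norm{z_2}_{\Hs^2\left(\Omega_T\right)}$.

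Multiplying these out, the first term contributes $C h^{s+1/2} \norm{u}_{\Hs^s\left(\Omega_T\right)} \norm{z_1}_{\Hs^2\left(\Omega_T\right)}$ and the second $C h^s \norm{u}_{\Hs^s\left(\Omega_T\right)} \norm{z_2}_{\Hs^2\left(\Omega_T\right)}$. Choosing $h^\ast > 0$ small enough so that both $h^\ast \le 1$ and Theorem \ref{theo: error estimate L2 T norm} applies for $h \in \left(0, h^\ast\right)$, we get $h^{s+1/2} \le h^s$, and adding the two contributions yields the claimed estimate.

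The only subtlety I anticipate is the two-tiered structure of the argument: controlling the first right-hand term of the specialized \eqref{eq: key estimate} requires the already-established $\Ls^2\left(\Omega\right)$-at-$t=T$ estimate — hence the appearance of $z_1$ alongside $z_2$ in the final bound — so the order of proof (Theorem \ref{theo: error estimate L2 T norm} before Theorem \ref{theo: error estimate L2 Q norm}) is essential. Otherwise everything is a direct assembly of Lemmas \ref{lem: projection error} and \ref{lem: error estimate h-norm} with \eqref{eq: regularity estimate}; no absorption or fixed-point step is needed, since $\norm{\nabla\left(u-u_h\right)}_{\LLs^2\left(\Omega_T\right)}$ is controlled through $\norm{u-u_h}_h$ rather than through $\norm{u-u_h}_{\Ls^2\left(\Omega_T\right)}$, which rules out any circularity.
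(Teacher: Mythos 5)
Your proof is correct and follows essentially the same duality argument as the paper: the same choice of dual data $g=\norm{u-u_h}^{-1}_{\Ls^2\left(\Omega_T\right)}\left(u-u_h\right)$, $z_T=0$, the same splitting of the right-hand side of \eqref{eq: key estimate}, and the same assembly of Theorem \ref{theo: error estimate L2 T norm} with Lemmas \ref{lem: projection error} and \ref{lem: error estimate h-norm}. The only (harmless) deviation is the trace factor: since $z_2\left(\cdot,T\right)=0$ you bound $\norm{e_2\left(\cdot,T\right)}_{\Ls^2\left(\Omega\right)}=\norm{\left(\Pi_h z_2\right)\left(\cdot,T\right)}_{\Ls^2\left(\Omega\right)}$ by $C\sqrt{h}$ via the first estimate of Lemma \ref{lem: projection error}, whereas the paper bounds it by a constant using \eqref{eq: time trace inequality} and the $\Hs^1\left(\Omega_T\right)$-seminorm stability of $\Pi_h$; your route even gives the marginally sharper intermediate order $h^{s+1/2}$ for that term before you relax it to $h^s$.
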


\begin{proof}
    When $g=\norm{u-u_h}^{-1}_{\Ls^2\left(\Omega_T\right)}\left(u-u_h\right)$ and $z_T=0$, it follows from \eqref{eq: key estimate} that 
    $$
    \begin{aligned}
        &\norm{u-u_h}_{\Ls^2\left(\Omega_T\right)} \le\\
        &\le \norm{\left(u-u_h\right)\left(\cdot, T\right)}_{\Ls^2\left(\Omega\right)}\norm{\left(\Pi_h z_2\right)\left(\cdot, T\right)}_{\Ls^2\left(\Omega\right)} + C\norm{\nabla\left(u-u_h\right)}_{\LLs^2\left(\Omega_T\right)}\norm{\Ds e_2}_{\LLs^2\left(\Omega_T\right)},
    \end{aligned}
    $$
    where $e_2 = z_2 - \Pi_h z_2$. Now, by invoking \eqref{eq: time trace inequality}, \eqref{eq: compare L2 and the Y norm}, the $\Hs^1\left(\Omega_T\right)$-seminorm stability of $\Pi_h$ \cite[Proposition 22.21]{Ern2021}, and finally \eqref{eq: regularity estimate}, we obtain
    $$
    \begin{aligned}
        \norm{\left(\Pi_h z_2\right)\left(\cdot, T\right)}^2_{\Ls^2\left(\Omega\right)} &\le C\norm{\Pi_h z_2}^2_{\Us} \\
        &\le C\left(\norm{\nabla\left(\Pi_h z_2\right)}^2_{\LLs^2\left(\Omega_T\right)}+\norm{\Ds\left(\Pi_h z_2\right)}^2_{\LLs^2\left(\Omega_T\right)}\right)\\
        &\le C\norm{\Ds\left(\Pi_h z_2\right)}^2_{\LLs^2\left(\Omega_T\right)} \\
        &\le C \norm{\Ds z_2}^2_{\LLs^2\left(\Omega_T\right)} \le C.
    \end{aligned}
    $$
    Together with Theorem \ref{theo: error estimate L2 T norm}, as well as Lemmas \ref{lem: projection error} and \ref{lem: error estimate h-norm}, we thus derive the desired result.
\end{proof}

We proceed with estimating the error in the $\Hs^1\left(\Omega_T\right)$-norm. This norm is stronger than $\norm{\cdot}_{h}$ due to the inclusion of $\norm{\partial_t \cdot}_{\Ls^2\left(\Omega_T\right)}$. Unlike Lemma \ref{lem: error estimate h-norm}, this estimate cannot be derived directly from \eqref{eq: discrete stability estimate}. Following classical arguments for elliptic problems, we combine Lemmas \ref{lem: projection error} and \ref{theo: error estimate L2 Q norm} with Theorem \ref{theo: error estimate L2 Q norm} to establish the desired result.

\begin{theorem}
    \label{theo: error estimate H1 Q norm}
    Let $u\in \Us_0$ and $u_h\in \Us_h$ be the solutions to Problems \eqref{eq: variational formulation} and \eqref{eq: discrete variational formulation}, respectively. Assume that $u\in \Hs^s\left(\Omega_T\right)$ for some $s\in \left[1,k+1\right]$ and that Assumption \ref{assump: Aubin-Nitshe assumption} holds. Then, we can find $h^\ast>0$ such that for all $h\in \left(0,h^\ast\right)$, their holds the following estimate
    \begin{equation}
        \label{eq: error estimate H1 Q norm}
        \norm{u-u_h}_{\Hs^1\left(\Omega_T\right)} \le C h^{s-1}\norm{u}_{\Hs^s\left(\Omega_T\right)}\left(1+\norm{z_1}_{\Hs^2\left(\Omega_T\right)} + \norm{z_2}_{\Hs^2\left(\Omega_T\right)}\right).
    \end{equation}
\end{theorem}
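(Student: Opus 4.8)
The plan is to pass through the $\Ls^2$-projection $\Pi_h$ and to trade the non-conforming time-derivative contribution for an $\Ls^2\left(\Omega_T\right)$-bound by means of a standard inverse estimate on the quasi-uniform space-time mesh. Since $\norm{y}_{\Hs^1\left(\Omega_T\right)}^2 = \norm{y}_{\Ls^2\left(\Omega_T\right)}^2 + \norm{\Ds y}_{\LLs^2\left(\Omega_T\right)}^2$, it suffices to bound the three quantities $\norm{u-u_h}_{\Ls^2\left(\Omega_T\right)}$, $\norm{\nabla\left(u-u_h\right)}_{\LLs^2\left(\Omega_T\right)}$ and $\norm{\partial_t\left(u-u_h\right)}_{\Ls^2\left(\Omega_T\right)}$ separately; note that $u-u_h\in \Hs^1\left(\Omega_T\right)$ since $s\ge 1$ and $\Us_h\subset \Cs\left(\overline{\Omega_T}\right)$.

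The first two quantities are already available. For the $\Ls^2\left(\Omega_T\right)$-term, Theorem \ref{theo: error estimate L2 Q norm} gives $\norm{u-u_h}_{\Ls^2\left(\Omega_T\right)} \le C h^s \norm{u}_{\Hs^s\left(\Omega_T\right)}\left(\norm{z_1}_{\Hs^2\left(\Omega_T\right)}+\norm{z_2}_{\Hs^2\left(\Omega_T\right)}\right)$. For the spatial-gradient term, the uniform positive definiteness of $\bm A$ yields $\norm{\nabla\left(u-u_h\right)}_{\LLs^2\left(\Omega_T\right)} \le C\norm{u-u_h}_{\Vs} \le C\norm{u-u_h}_h$, and Lemma \ref{lem: error estimate h-norm} bounds the last quantity by $C h^{s-1}\norm{u}_{\Hs^s\left(\Omega_T\right)}$. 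The time-derivative term is the crux of the argument. I would split $\partial_t\left(u-u_h\right) = \partial_t\left(u-\Pi_h u\right) + \partial_t\left(\Pi_h u - u_h\right)$. The first summand is controlled directly by Lemma \ref{lem: projection error}, since $\norm{\partial_t\left(u-\Pi_h u\right)}_{\Ls^2\left(\Omega_T\right)} \le \norm{\Ds\left(u-\Pi_h u\right)}_{\LLs^2\left(\Omega_T\right)} \le C h^{s-1}\norm{u}_{\Hs^s\left(\Omega_T\right)}$. For the second summand, $\Pi_h u - u_h \in \Us_h$ is piecewise polynomial on a quasi-uniform mesh, so an inverse inequality gives $\norm{\partial_t\left(\Pi_h u - u_h\right)}_{\Ls^2\left(\Omega_T\right)} \le C h^{-1}\norm{\Pi_h u - u_h}_{\Ls^2\left(\Omega_T\right)}$, and the triangle inequality $\norm{\Pi_h u - u_h}_{\Ls^2\left(\Omega_T\right)} \le \norm{u-\Pi_h u}_{\Ls^2\left(\Omega_T\right)} + \norm{u-u_h}_{\Ls^2\left(\Omega_T\right)}$ reduces matters to Lemma \ref{lem: projection error} and Theorem \ref{theo: error estimate L2 Q norm}. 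This produces a bound of the form $C h^{s-1}\norm{u}_{\Hs^s\left(\Omega_T\right)}\left(1+\norm{z_1}_{\Hs^2\left(\Omega_T\right)}+\norm{z_2}_{\Hs^2\left(\Omega_T\right)}\right)$ once $h^s\le h^{s-1}$ is used, which holds for $h<h^\ast$ with $h^\ast\le 1$.

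Collecting the three bounds and using $h<h^\ast$ yields \eqref{eq: error estimate H1 Q norm}. I expect the time-derivative estimate to be the main obstacle: there is no discrete coercivity in any norm controlling $\partial_t\left(\cdot\right)$, so \eqref{eq: discrete stability estimate} cannot deliver the bound on its own, and it has to be bootstrapped from the $\Ls^2\left(\Omega_T\right)$-estimate through the inverse inequality — this is precisely where quasi-uniformity of $\left\{\TT_h\right\}$ is invoked and why the loss of one power of $h$ (from $h^s$ down to $h^{s-1}$) appears unavoidable along this route. A secondary point to be careful about is that the inverse estimate must be applied on the full space-time mesh to the space-time gradient $\Ds$, which is legitimate because elements of $\Us_h$ are genuine space-time polynomials in $\Cs\left(\overline{\Omega_T}\right)$ and the family $\left\{\TT_h\right\}$ is quasi-uniform.
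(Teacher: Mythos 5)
Your proposal is correct and follows essentially the same route as the paper: both pass through $\Pi_h u$, control $u-\Pi_h u$ by Lemma \ref{lem: projection error}, and handle the discrete remainder $\Pi_h u - u_h$ by the global inverse inequality combined with the $\Ls^2\left(\Omega_T\right)$ estimate of Theorem \ref{theo: error estimate L2 Q norm}. The only cosmetic difference is that you split the $\Hs^1\left(\Omega_T\right)$-norm into its three components and bound the spatial-gradient part separately via Lemma \ref{lem: error estimate h-norm}, whereas the paper applies the inverse inequality to the full $\Hs^1\left(\Omega_T\right)$-norm of $\Pi_h u - u_h$ at once.
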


\begin{proof}
    Using the triangle inequality, we have
    $$
    \norm{u-u_h}_{\Hs^1\left(\Omega_T\right)} \le \norm{u-\Pi_h u}_{\Hs^1\left(\Omega_T\right)} + \norm{\Pi_h u-u_h}_{\Hs^1\left(\Omega_T\right)}.
    $$
    On the one hand, it follows from Lemma \ref{lem: projection error} that 
    $$
    \norm{u-\Pi_h u}_{\Hs^1\left(\Omega_T\right)}\le Ch^{s-1}\norm{u}_{\Hs^s\left(\Omega_T\right)}.
    $$
    On the other hand, we apply the global inverse inequality \cite[Corollary 1.141]{EG2004}, Lemma \ref{lem: projection error}, and Theorem \ref{theo: error estimate L2 Q norm} to obtain
    $$
    \begin{aligned}
        \norm{\Pi_h u-u_h}_{\Hs^1\left(\Omega_T\right)} &\le C h^{-1}\norm{\Pi_h u-u_h}_{\Ls^2\left(\Omega_T\right)} \\
        &\le C h^{-1}\left(\norm{\Pi_h u-u}_{\Ls^2\left(\Omega_T\right)} + \norm{u-u_h}_{\Ls^2\left(\Omega_T\right)}\right) \\
        &\le C h^{s-1}\norm{u}_{\Hs^s\left(\Omega_T\right)}\left(1+\norm{z_1}_{\Hs^2\left(\Omega_T\right)} + \norm{z_2}_{\Hs^2\left(\Omega_T\right)}\right),
    \end{aligned}
    $$
    for all $h\in \left(0,h^\ast\right)$, where $h^\ast > 0$ is given. The conclusion follows.
\end{proof}

From Lemma \ref{lem: error estimate h-norm} and Theorem \ref{theo: error estimate H1 Q norm}, we observe that the error estimates in the two norms $\norm{\cdot}_h$ and $\norm{\cdot}_{\Hs^1\left(\Omega_T\right)}$ are formally of the same order. The latter is preferable, as it evaluates the error with respect to $\norm{\partial_t \cdot}_{\Ls^2\left(\Omega_T\right)}$, thereby making more effective use of the condition $u\in \Hs^s\left(\Omega_T\right)$, where $s\in \left[1,k+1\right]$. 

However, it is worth noting that we can only conclude the convergence order from Theorem \ref{theo: error estimate H1 Q norm} if there exists an $h$-independent constant $C$ such that
\begin{equation}
    \label{eq: additional regularity estimate}
    \norm{z_1}_{\Hs^2\left(\Omega_T\right)} \le C \qqq \text{and}\qqq \norm{z_2}_{\Hs^2\left(\Omega_T\right)} \le C.
\end{equation}
In that case, we also obtain the convergence order with respect to the two $\Ls^2$-norms. Specifically, we have the following result:

\begin{corollary}
    \label{cor: order convergence in 3 norms}
    Let $u\in \Us_0$ and $u_h\in \Us_h$ be the solutions to Problems \eqref{eq: variational formulation} and \eqref{eq: discrete variational formulation}, respectively. Assume that the conditions of Theorem \ref{theo: error estimate H1 Q norm} and \eqref{eq: additional regularity estimate} hold. Then, as $h\to 0$, the following higher-order estimates hold
    $$
    \norm{\left(u-u_h\right)\left(\cdot, T\right)}_{\Ls^2\left(\Omega\right)} = \mathcal{O}\left(h^s\right) \qqq \text{and}\qqq \norm{u-u_h}_{\Ls^2\left(\Omega_T\right)} = \mathcal{O}\left(h^s\right),
    $$
    along with the optimal estimate
    $$
    \norm{u-u_h}_{\Hs^1\left(\Omega_T\right)} = \mathcal{O}\left(h^{s-1}\right).
    $$
\end{corollary}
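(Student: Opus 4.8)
The plan is to obtain all three estimates by simply feeding the uniform bound \eqref{eq: additional regularity estimate} into Theorems~\ref{theo: error estimate L2 T norm}, \ref{theo: error estimate L2 Q norm}, and \ref{theo: error estimate H1 Q norm}, and then absorbing the now $h$-independent factors $\norm{z_1}_{\Hs^2\left(\Omega_T\right)}$, $\norm{z_2}_{\Hs^2\left(\Omega_T\right)}$ (together with the fixed quantity $\norm{u}_{\Hs^s\left(\Omega_T\right)}$) into the generic constant. No new analytical ingredient is needed; everything reduces to bookkeeping of constants and of the thresholds $h^\ast$.

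Concretely, I would first fix a common $h^\ast>0$ as the minimum of the three thresholds produced by Theorems~\ref{theo: error estimate L2 T norm}--\ref{theo: error estimate H1 Q norm}, so that all three estimates are valid simultaneously for every $h\in\left(0,h^\ast\right)$. Then: (i) Theorem~\ref{theo: error estimate L2 T norm} gives $\norm{\left(u-u_h\right)\left(\cdot,T\right)}_{\Ls^2\left(\Omega\right)}\le C h^s\norm{u}_{\Hs^s\left(\Omega_T\right)}\norm{z_1}_{\Hs^2\left(\Omega_T\right)}$, and since $\norm{z_1}_{\Hs^2\left(\Omega_T\right)}\le C$ by \eqref{eq: additional regularity estimate} while $\norm{u}_{\Hs^s\left(\Omega_T\right)}$ does not depend on $h$, the right-hand side is $\mathcal{O}\left(h^s\right)$; (ii) Theorem~\ref{theo: error estimate L2 Q norm} bounds $\norm{u-u_h}_{\Ls^2\left(\Omega_T\right)}$ by $C h^s\norm{u}_{\Hs^s\left(\Omega_T\right)}\left(\norm{z_1}_{\Hs^2\left(\Omega_T\right)}+\norm{z_2}_{\Hs^2\left(\Omega_T\right)}\right)$, and \eqref{eq: additional regularity estimate} makes the parenthesis a uniform constant, hence $\mathcal{O}\left(h^s\right)$; (iii) Theorem~\ref{theo: error estimate H1 Q norm} bounds $\norm{u-u_h}_{\Hs^1\left(\Omega_T\right)}$ by $C h^{s-1}\norm{u}_{\Hs^s\left(\Omega_T\right)}\left(1+\norm{z_1}_{\Hs^2\left(\Omega_T\right)}+\norm{z_2}_{\Hs^2\left(\Omega_T\right)}\right)$, and the same substitution turns the bracket into a uniform constant, giving $\mathcal{O}\left(h^{s-1}\right)$.

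I do not anticipate any genuine obstacle. The only points deserving a sentence of care are that the constant in \eqref{eq: additional regularity estimate} is independent of $h$ (this is part of the hypothesis, and it is exactly what lets us pass from the theorems' bounds to the $\mathcal{O}$-statements) and that the three $h$-intervals can be intersected to a single one, which is immediate since each is of the form $\left(0,h^\ast\right)$. With these remarks in place the conclusion is read off directly.
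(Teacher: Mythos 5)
Your proposal is correct and coincides with the paper's own (essentially omitted) argument: the corollary is obtained precisely by substituting the uniform bounds \eqref{eq: additional regularity estimate} into Theorems~\ref{theo: error estimate L2 T norm}, \ref{theo: error estimate L2 Q norm}, and \ref{theo: error estimate H1 Q norm} and absorbing the resulting $h$-independent factors into the generic constant. Your remarks about intersecting the three thresholds $h^\ast$ are a reasonable (and harmless) extra precaution that the paper leaves implicit.
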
 

Finally, we improve the estimate in Theorem \ref{theo: error estimate L2 Q norm} by incorporating a negative-order norm. This refined estimate is particularly relevant to the study of superconvergence. On the space $\Ls^2\left(\Omega_T\right)$, we define the following norm
$$
\norm{y}_{\Hs^{-r }\left(\Omega_T\right)}:= \sup_{\xi\in \Hs^{r }\left(\Omega_T\right)}\dfrac{1}{\norm{\xi}_{\Hs^{r }\left(\Omega_T\right)}}\int\limits_0^T\int\limits_\Omega \xi y \dx\dt \qqqq \forall y \in \Ls^2\left(\Omega_T\right),
$$
for any fixed $r \ge 1$. Note that this norm is weaker than $\norm{\cdot}_{\Ls^2\left(\Omega_T\right)}$ and should not be confused with the norm of the dual space $\left(\Hs_0^r\left(\Omega_T\right)\right)^{\prime}$. To achieve the desired result, we employ duality arguments once again. Let us introduce another assumption:

\begin{assumption}
    \label{assump: H-r assumption}
    Let $k\ge 2$ and $r \in \left[1, k-1\right]$. For any $g\in \Hs^{r }\left(\Omega_T\right)$ and $z_T=0$, suppose that the solution $z\in \Us$ to Problem \eqref{eq: Aubin-Nitshe assumption} has an improved regularity $z\in \Hs^{r +2}\left(\Omega_T\right)$, along with the stability estimate $\norm{z}_{\Hs^{r +2}\left(\Omega_T\right)}\le C\norm{g}_{\Hs^{r }\left(\Omega_T\right)}$.
\end{assumption}

\begin{lemma}
    \label{lem: H-1 norm error estimate}
    Let $u\in \Us_0$ and $u_h\in \Us_h$ (with $k\ge 2$) be the solutions to Problems \eqref{eq: variational formulation} and \eqref{eq: discrete variational formulation}, respectively. For $r \in \left[1,k-1\right]$, if Assumption \ref{assump: H-r assumption} holds, then we have the following inequality
    $$
    \norm{u-u_h}_{\Hs^{-r }\left(\Omega_T\right)}\le Ch^{r +1}\left(\norm{\left(u-u_h\right)\left(\cdot, T\right)}_{\Ls^2\left(\Omega\right)} + \norm{u-u_h}_h\right).
    $$
\end{lemma}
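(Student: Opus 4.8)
The plan is to run a duality argument in the spirit of the proofs of Theorems~\ref{theo: error estimate L2 T norm} and~\ref{theo: error estimate L2 Q norm}, but now testing the error against an arbitrary $\xi\in\Hs^r(\Omega_T)$ and exploiting the elevated regularity granted by Assumption~\ref{assump: H-r assumption}. Fix $\xi\in\Hs^r(\Omega_T)$, set $g:=\xi$ and $z_T:=0$ in the dual problem~\eqref{eq: Aubin-Nitshe assumption}, and let $z$ be its solution. Since $r\in[1,k-1]$, Assumption~\ref{assump: H-r assumption} provides $z\in\Hs^{r+2}(\Omega_T)$ together with $\norm{z}_{\Hs^{r+2}(\Omega_T)}\le C\norm{g}_{\Hs^r(\Omega_T)}$; in particular $z\in\Hs^1(\Omega_T)$, so inequality~\eqref{eq: key estimate} is available for this data, and with $z_T=0$ its left-hand side reduces to $\int_0^T\int_\Omega g(u-u_h)\dx\dt$, giving
$$
\int_0^T\int_\Omega g(u-u_h)\dx\dt \le \norm{(u-u_h)(\cdot,T)}_{\Ls^2(\Omega)}\norm{e(\cdot,T)}_{\Ls^2(\Omega)} + C\norm{\nabla(u-u_h)}_{\LLs^2(\Omega_T)}\norm{\Ds e}_{\LLs^2(\Omega_T)},
$$
where $e:=z-\Pi_h z$. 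It remains to estimate the three factors on the right-hand side.

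Two of these are routine. First, by the uniform positive-definiteness of $\bm{A}$ and $\norm{\cdot}_{\Vs}\le\norm{\cdot}_h$ we get $\norm{\nabla(u-u_h)}_{\LLs^2(\Omega_T)}\le C\norm{u-u_h}_{\Vs}\le C\norm{u-u_h}_h$. Second, the second estimate of Lemma~\ref{lem: projection error}, applied with $\eta=r+2\in[1,k+1]$, yields $\norm{\Ds e}_{\LLs^2(\Omega_T)}\le Ch^{r+1}\norm{z}_{\Hs^{r+2}(\Omega_T)}$.

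The delicate factor is $\norm{e(\cdot,T)}_{\Ls^2(\Omega)}$, and I expect this to be the only genuine obstacle: the first estimate of Lemma~\ref{lem: projection error} only furnishes a factor $\sqrt h$ here, which is too weak to reach the target order $h^{r+1}$. Instead I would control the terminal trace of $e$ through the graph norm of $\Us$. Since $z\in\Us$ and $\Pi_h z\in\Us_h\subset\Us$, we have $e\in\Us$, so inequality~\eqref{eq: time trace inequality} gives $\norm{e(\cdot,T)}_{\Ls^2(\Omega)}\le C\norm{e}_{\Us}$. Now $\norm{e}_{\Vs}\le C\norm{\nabla e}_{\LLs^2(\Omega_T)}$, while the continuous embedding $\Ls^2(\Omega_T)\hookrightarrow\Vs^\prime$ (Cauchy--Schwarz together with~\eqref{eq: compare L2 and the Y norm}) gives $\norm{\partial_t e}_{\Vs^\prime}\le C\norm{\partial_t e}_{\Ls^2(\Omega_T)}$; hence $\norm{e}_{\Us}^2=\norm{e}_{\Vs}^2+\norm{\partial_t e}_{\Vs^\prime}^2\le C\norm{\Ds e}_{\LLs^2(\Omega_T)}^2$, and one more application of Lemma~\ref{lem: projection error} gives $\norm{e(\cdot,T)}_{\Ls^2(\Omega)}\le Ch^{r+1}\norm{z}_{\Hs^{r+2}(\Omega_T)}$.

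Inserting the three bounds into the displayed inequality and invoking the stability estimate of Assumption~\ref{assump: H-r assumption} yields
$$
\int_0^T\int_\Omega g(u-u_h)\dx\dt \le Ch^{r+1}\norm{g}_{\Hs^r(\Omega_T)}\left(\norm{(u-u_h)(\cdot,T)}_{\Ls^2(\Omega)} + \norm{u-u_h}_h\right).
$$
Dividing by $\norm{g}_{\Hs^r(\Omega_T)}$ and taking the supremum over all $\xi=g\in\Hs^r(\Omega_T)$ in the definition of $\norm{\cdot}_{\Hs^{-r}(\Omega_T)}$ produces the asserted estimate. Everything except the terminal-trace bound of the previous paragraph is a routine assembly of tools already in hand; the one point to get right is not to apply Lemma~\ref{lem: projection error} directly at $t=T$ but to pass through $\norm{e}_{\Us}$ in order to recover the full power $h^{r+1}$.
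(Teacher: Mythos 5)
Your proposal is correct and follows essentially the same route as the paper's proof: the same choice $g=\xi$, $z_T=0$ in the dual problem, the same use of \eqref{eq: key estimate}, and crucially the same device of controlling $\norm{e(\cdot,T)}_{\Ls^2(\Omega)}$ via the trace inequality \eqref{eq: time trace inequality} and the graph norm $\norm{e}_{\Us}$ rather than the $\sqrt{h}$ trace estimate of Lemma~\ref{lem: projection error}, before concluding with the projection estimate at $\eta=r+2$ and the stability bound of Assumption~\ref{assump: H-r assumption}. The only cosmetic difference is that you bound $\norm{e}_{\Us}$ by the seminorm $\norm{\Ds e}_{\LLs^2(\Omega_T)}$ directly, while the paper passes through the full $\Hs^1(\Omega_T)$-norm of $e$; both yield the factor $h^{r+1}$.
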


\begin{proof}
    First, we choose $g\in \Hs^{r }\left(\Omega_T\right)\setminus\left\{0\right\}$ and $z_T=0$ in \eqref{eq: key estimate} to get
    $$
    \begin{aligned}
        &\int\limits_0^T\int\limits_\Omega g\left(u-u_h\right)\dx\dt \le \\
        &\le \norm{\left(u-u_h\right)\left(\cdot, T\right)}_{\Ls^2\left(\Omega\right)}\norm{e\left(\cdot, T\right)}_{\Ls^2\left(\Omega\right)} + C\norm{\nabla\left(u-u_h\right)}_{\LLs^2\left(\Omega_T\right)}\norm{\Ds e}_{\LLs^2\left(\Omega_T\right)}.
    \end{aligned}
    $$
    By applying \eqref{eq: time trace inequality} and \eqref{eq: compare L2 and the Y norm}, we have
    $$
    \norm{e\left(\cdot, T\right)}_{\Ls^2\left(\Omega\right)} \le C\norm{e}_{\Us} \le C \norm{e}_{\Hs^1\left(\Omega_T\right)}.
    $$
    Together with Lemma \ref{lem: projection error}, this implies
    $$
    \int\limits_0^T\int\limits_\Omega g\left(u-u_h\right)\dx\dt \le C\left(\norm{\left(u-u_h\right)\left(\cdot, T\right)}_{\Ls^2\left(\Omega\right)} + \norm{u-u_h}_h\right)h^{r +1}\norm{z}_{\Hs^{r + 2}\left(\Omega_T\right)},
    $$
    which leads to
    $$
    \begin{aligned}
        \norm{u-u_h}_{\Hs^{-r }\left(\Omega_T\right)} &= \sup_{g\in \Hs^{r }\left(\Omega_T\right)}\dfrac{1}{\norm{g}_{\Hs^{r }\left(\Omega_T\right)}}\int\limits_0^T\int\limits_\Omega g\left(u-u_h\right)\dx\dt \\
        &\le \dfrac{C\left(\norm{\left(u-u_h\right)\left(\cdot, T\right)}_{\Ls^2\left(\Omega\right)} + \norm{u-u_h}_h\right)h^{r +1}\norm{z}_{\Hs^{r  + 2}\left(\Omega_T\right)}}{\norm{g}_{\Hs^{r }\left(\Omega_T\right)}}\\
        &\le C\left(\norm{\left(u-u_h\right)\left(\cdot, T\right)}_{\Ls^2\left(\Omega\right)} + \norm{u-u_h}_h\right)h^{r +1},
    \end{aligned}
    $$
    using Assumption \ref{assump: H-r assumption}. This completes the proof.
\end{proof}

By combining Corollary \ref{cor: order convergence in 3 norms} with Lemmas \ref{lem: error estimate h-norm} and \ref{lem: H-1 norm error estimate}, we obtain the following $\Hs^{-r }\left(\Omega_T\right)$-norm convergence result:

\begin{theorem}
    Let $u\in \Us_0$ and $u_h\in \Us_h$ (with $k\ge 2$) be the solutions to Problems \eqref{eq: variational formulation} and \eqref{eq: discrete variational formulation}, respectively. Suppose that the assumptions of Corollary \ref{cor: order convergence in 3 norms} and Lemma \ref{lem: H-1 norm error estimate} hold. Then, as $h\to 0$, we achieve the following convergence order
    $$
    \norm{u-u_h}_{\Hs^{-r }\left(\Omega_T\right)} = \mathcal{O}\left(h^{r  + s}\right).
    $$
\end{theorem}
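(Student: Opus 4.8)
The plan is to obtain the result purely by chaining together the three estimates that are already available. Since the hypotheses of Lemma \ref{lem: H-1 norm error estimate} are assumed, in particular Assumption \ref{assump: H-r assumption} with $k\ge 2$ and $r\in\left[1,k-1\right]$, that lemma applies and gives
$$
\norm{u-u_h}_{\Hs^{-r}\left(\Omega_T\right)}\le Ch^{r+1}\left(\norm{\left(u-u_h\right)\left(\cdot, T\right)}_{\Ls^2\left(\Omega\right)} + \norm{u-u_h}_h\right).
$$
Thus it remains only to control the two terms inside the parentheses by fixed powers of $h$ with $h$-independent constants, and to multiply through.

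For the boundary term I would invoke Corollary \ref{cor: order convergence in 3 norms}: since ``the assumptions of Corollary \ref{cor: order convergence in 3 norms}'' are in force — namely the hypotheses of Theorem \ref{theo: error estimate H1 Q norm} together with the $h$-uniform bounds \eqref{eq: additional regularity estimate} on $\norm{z_1}_{\Hs^2\left(\Omega_T\right)}$ and $\norm{z_2}_{\Hs^2\left(\Omega_T\right)}$ — we have $\norm{\left(u-u_h\right)\left(\cdot, T\right)}_{\Ls^2\left(\Omega\right)} = \mathcal{O}\left(h^s\right)$ as $h\to 0$. For the mesh-dependent term I would use Lemma \ref{lem: error estimate h-norm}, which yields $\norm{u-u_h}_h \le Ch^{s-1}\norm{u}_{\Hs^s\left(\Omega_T\right)} = \mathcal{O}\left(h^{s-1}\right)$; for $h$ small this is the dominant of the two contributions. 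Substituting both bounds gives
$$
\norm{u-u_h}_{\Hs^{-r}\left(\Omega_T\right)}\le Ch^{r+1}\left(\mathcal{O}\left(h^s\right) + \mathcal{O}\left(h^{s-1}\right)\right) = \mathcal{O}\left(h^{r+s}\right),
$$
for all $h$ below a suitably chosen threshold $h^\ast>0$, which is the asserted order.

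I do not expect any genuine obstacle here: the statement is a bookkeeping consequence of the earlier estimates, and the only points that deserve a sentence of care are (i) verifying that the constants delivered by Corollary \ref{cor: order convergence in 3 norms} are independent of $h$, which is exactly the purpose of \eqref{eq: additional regularity estimate}, and (ii) observing that the exponent $r+s$ arises from pairing the negative-norm gain $h^{r+1}$ against the \emph{weaker} of the two error quantities, the $\norm{\cdot}_h$-error of order $h^{s-1}$, rather than the sharper $\Ls^2\left(\Omega\right)$-error at $t=T$ of order $h^{s}$. Had one only the boundary term, the order would improve to $h^{r+s+1}$; it is the $\norm{\cdot}_h$ contribution that is the bottleneck, and this is made transparent by writing out the two cases separately before combining them.
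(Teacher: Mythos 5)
Your proposal is correct and follows exactly the route the paper intends: the paper gives no separate proof beyond the remark that the theorem follows ``by combining Corollary \ref{cor: order convergence in 3 norms} with Lemmas \ref{lem: error estimate h-norm} and \ref{lem: H-1 norm error estimate},'' which is precisely the chaining you carry out. Your added observation that the $\norm{\cdot}_h$-term of order $h^{s-1}$ is the bottleneck (so that the exponent is $r+s$ rather than $r+s+1$) is accurate and makes the bookkeeping explicit.
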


\section{Numerical results}
\label{sec: numerical results}

In this section, we present numerical examples to illustrate the improved error estimates discussed in Section \ref{sec: a priori error estimates}. Specifically, we compute the error in three norms: the $\Ls^2\left(\Omega\right)$-norm at $t=T$, the $\Ls^2\left(\Omega_T\right)$-norm, and the $\Hs^1\left(\Omega_T\right)$-norm. 

We employ continuous, elementwise linear finite elements to solve Problem \eqref{eq: discrete variational formulation} in the case where $\bm{A}$ is the identity matrix. The space-time domain $\Omega_T$ is discretized at various levels of mesh refinement, with $N$ subdivisions along each edge, starting from $N=2^2$. The corresponding mesh size is given by $h=1/N$. All experiments are conducted using FreeFEM++ \cite{Hecht2012}.

\begin{example}
    \label{ex: example 1}
    To verify our theoretical findings, we begin with \cite[Example 5.1]{Steinbach2015}, as this paper directly extends the error analysis presented in that work. We consider the space-time cylinder \(\Omega_T = (0,1)^2\) and choose the exact solution as
    $$
    u\left(x,t\right) = \sin\left(\pi x\right) \cos\left(\pi t\right),
    $$
    resulting in a non-homogeneous initial value problem. The initial value $u\left(\cdot, 0\right)$ is determined accordingly from this exact solution. 
\end{example}

Figure \ref{1d example plot in 2d and 3d} shows the solution $u_h$ of Problem \eqref{eq: discrete variational formulation} for $h=2^{-8}$. Table \ref{1d example 1} reports the errors and the experimental order of convergence for different mesh sizes. We observe that the error in all three norms decreases as the mesh is refined. At the same time, the corresponding convergence order tends to stabilize. The results presented in this table are further illustrated in Figure \ref{1d example 1 convergence}, which visually demonstrates the convergence behavior. 

These results align with the convergence rates established in Corollary \ref{cor: order convergence in 3 norms}. Furthermore, they suggest that those error estimates may also hold for more general problems with nonzero initial values.

\begin{figure}[h!]
    \centering
    \begin{minipage}{0.45\textwidth}
        \centering
        \includegraphics[width=\linewidth]{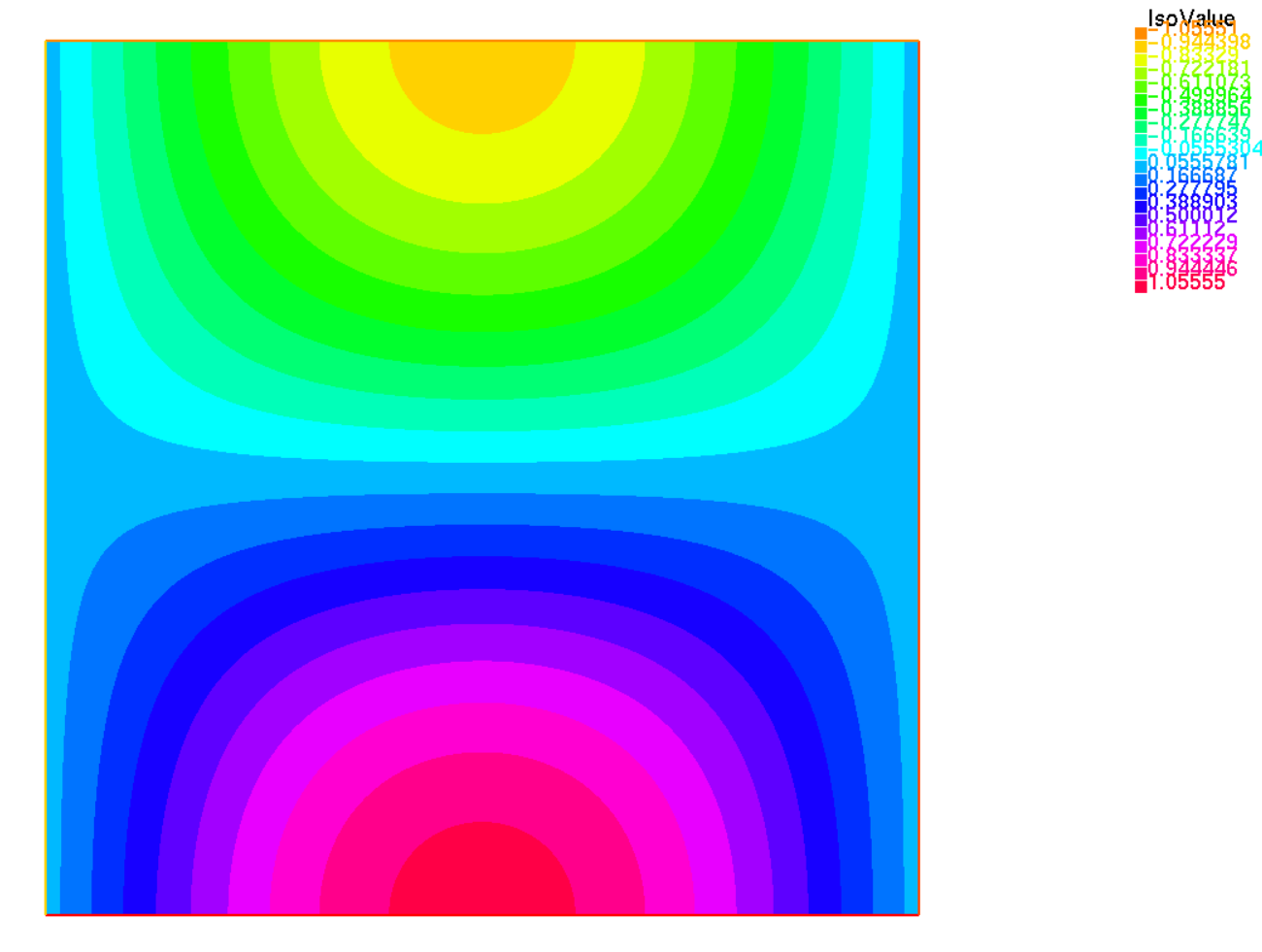}
    \end{minipage}%
    \hfill
    \begin{minipage}{0.45\textwidth}
        \centering
        \includegraphics[width=\linewidth]{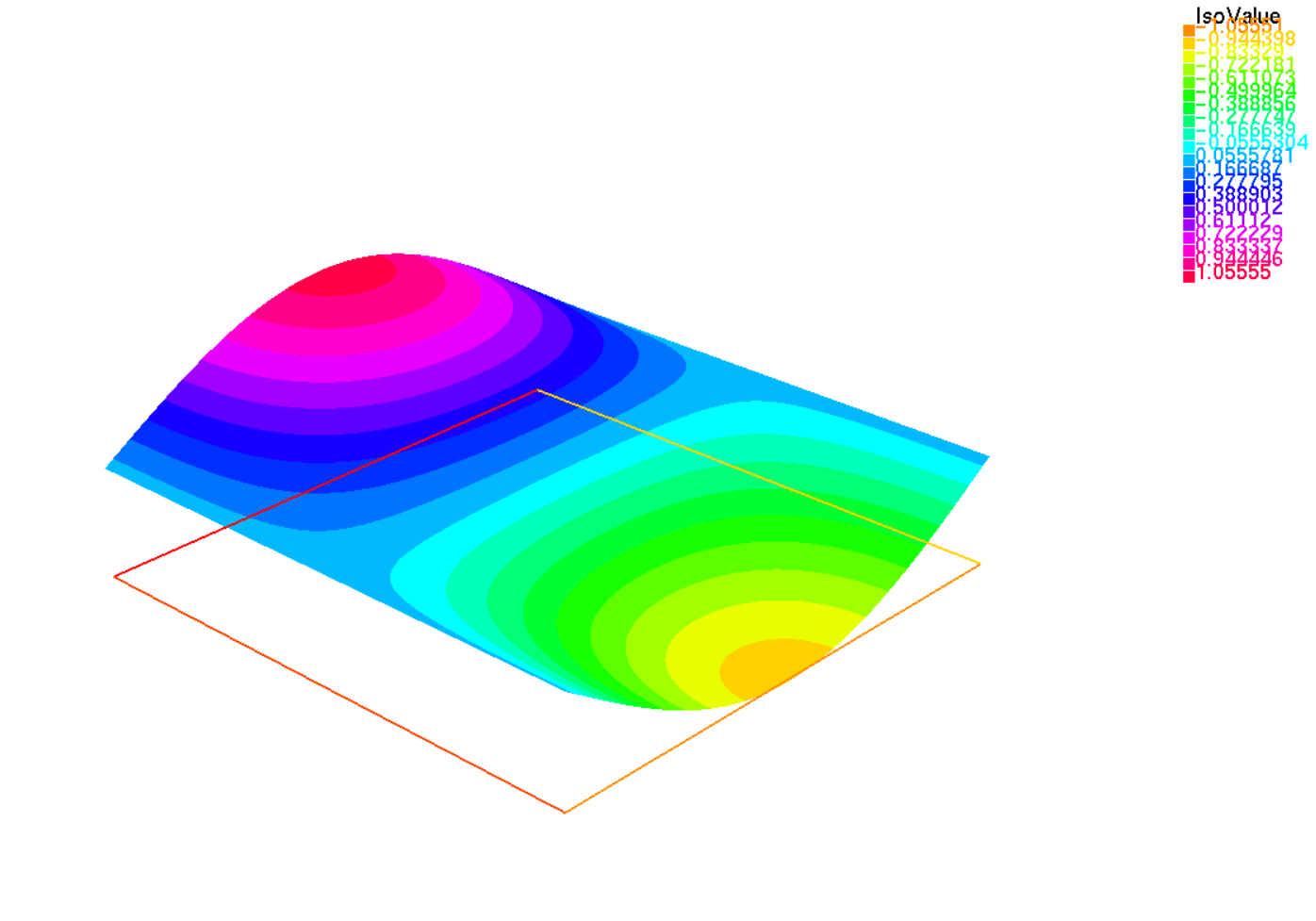}
    \end{minipage}
    \caption{The discrete solution $u_h$ for Example \ref{ex: example 1} with mesh size $h= 2^{-8}$.}
    \label{1d example plot in 2d and 3d}
\end{figure}

\begin{table}[!htp]
    \centering
    \resizebox{\textwidth}{!}{
    \begin{minipage}{\textwidth}%
        \caption{Errors in three different norms for various levels of mesh refinement in Example \ref{ex: example 1}.}
        \label{1d example 1}
        \begin{tabular}{|c|c|c|c|c|c|c|}
            \hline
            \rule{0pt}{2.75ex}  
             $h$ & \( \norm{\left(u-u_h\right)\left(\cdot, T\right)}_{\Ls^2\left(\Omega\right)} \) & Order & \( \norm{u-u_h}_{\Ls^2\left(\Omega_T\right)} \) & Order & \( \norm{u-u_h}_{\Hs^1\left(\Omega_T\right)} \) & Order \\[1ex] \hline
            \rule{0pt}{2.75ex}
            \(2^{-2}\) & \(9.969 \times 10^{-2}\) & -      & \(4.728 \times 10^{-2}\) & -      & \(3.055 \times 10^{-1}\) & -       \\ 
            \(2^{-3}\) & \(3.089 \times 10^{-2}\) & 1.690  & \(1.479 \times 10^{-2}\) & 1.677  & \(9.880 \times 10^{-2}\) & 1.629   \\ 
            \(2^{-4}\) & \(8.743 \times 10^{-3}\) & 1.821  & \(3.928 \times 10^{-3}\) & 1.913  & \(3.290 \times 10^{-2}\) & 1.586   \\ 
            \(2^{-5}\) & \(2.395 \times 10^{-3}\) & 1.868  & \(9.962 \times 10^{-4}\) & 1.979  & \(1.378 \times 10^{-2}\) & 1.256   \\ 
            \(2^{-6}\) & \(6.352 \times 10^{-4}\) & 1.915  & \(2.497 \times 10^{-4}\) & 1.996  & \(6.693 \times 10^{-3}\) & 1.041   \\ 
            \(2^{-7}\) & \(1.636 \times 10^{-4}\) & 1.957  & \(6.225 \times 10^{-5}\) & 2.004  & \(3.353 \times 10^{-3}\) & 0.997   \\ 
            \(2^{-8}\) & \(4.220 \times 10^{-5}\) & 1.955  & \(1.582 \times 10^{-5}\) & 1.977  & \(1.682 \times 10^{-3}\) & 0.996   \\ \hline
        \end{tabular}
    \end{minipage}}
\end{table}

\begin{figure}[h!]
    \centering
    \begin{minipage}{0.48\textwidth}
        \centering
        \includegraphics[width=\linewidth]{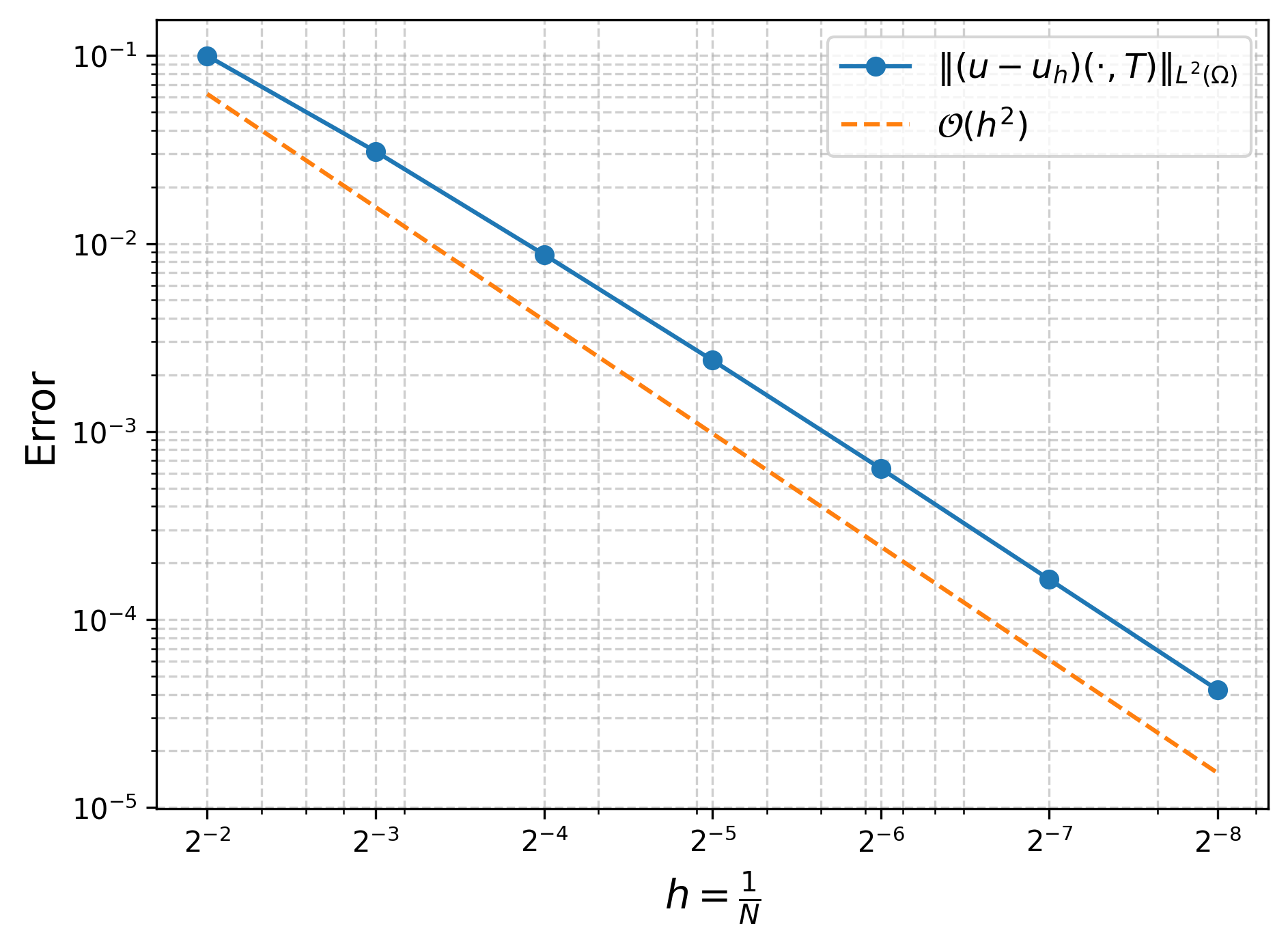}
    \end{minipage}%
    \hfill
    \begin{minipage}{0.48\textwidth}
        \centering
        \includegraphics[width=\linewidth]{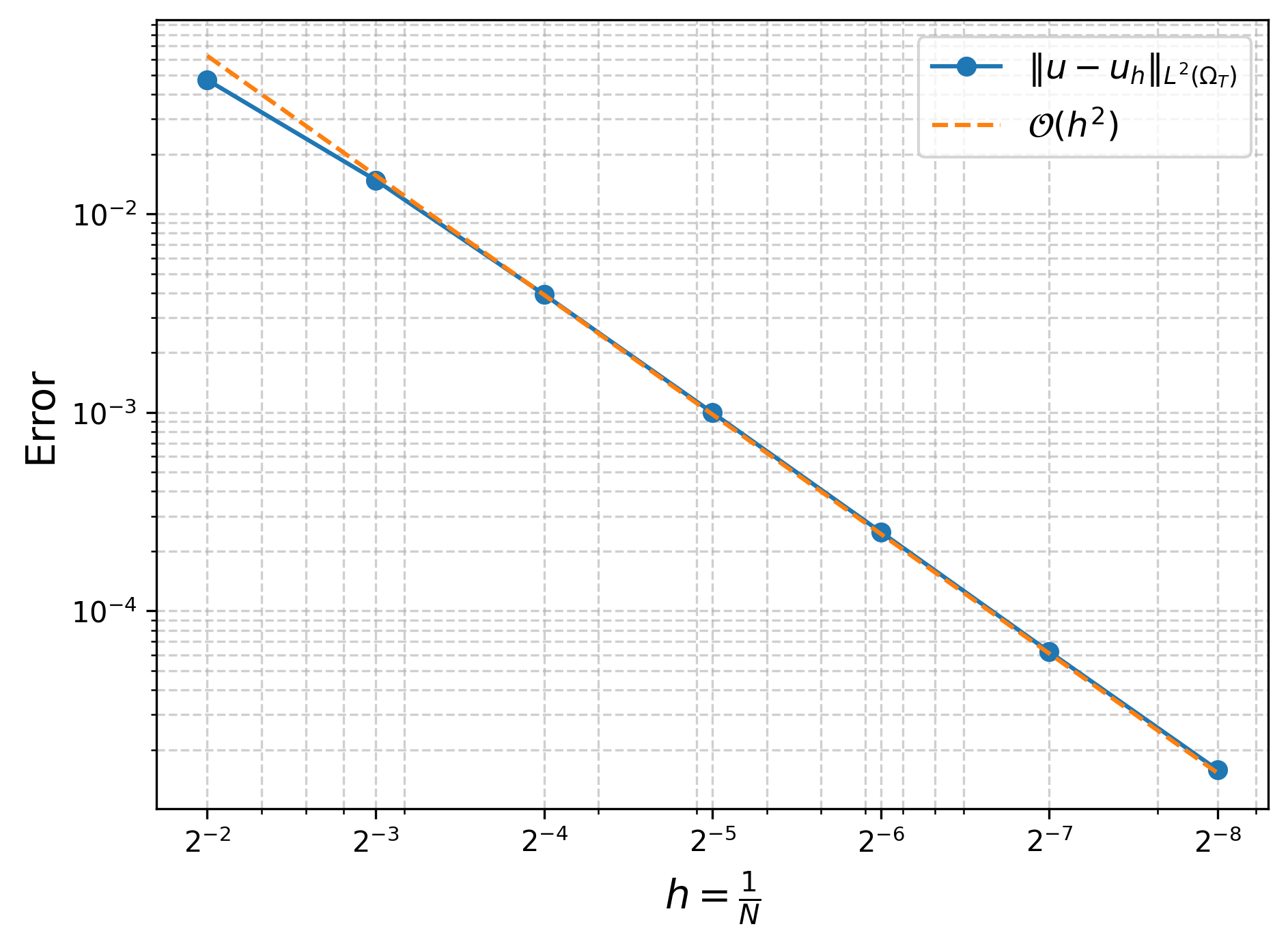}
    \end{minipage}\\[10pt]
    \centering
    \begin{minipage}{0.96\textwidth}
        \centering
        \includegraphics[width=0.5\linewidth]{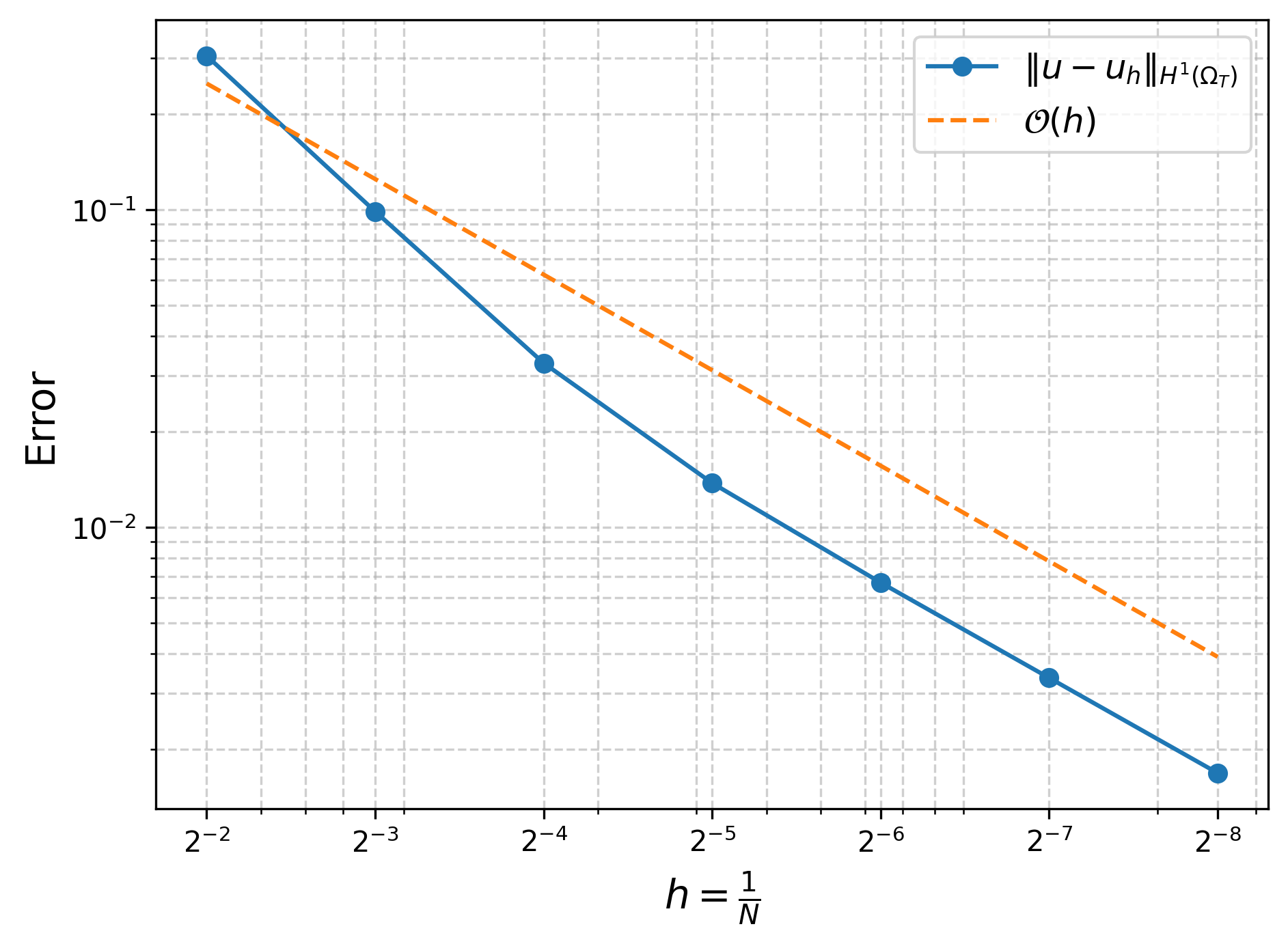}
    \end{minipage}
    \caption{Estimated convergence orders with respect to three different norms in Example \ref{ex: example 1}.}
    \label{1d example 1 convergence}
\end{figure}

\begin{example}
    \label{ex: example 2}
    In this example, we examine a homogeneous initial-boundary value problem in the space-time cylinder $\Omega_T = \left(0,1\right)^3$. The exact solution is chosen as
    $$
    u\left(\xb, t\right) = \sin\left(\pi x\right) \sin\left(\pi y\right) \sin\left(\pi t\right)\qqqq \xb = \left(x, y\right)^{\top}.
    $$
\end{example}

Table \ref{2d example 1} and Figure \ref{2d example 1 convergence} present the errors in all three norms along with the computed orders of convergence for various mesh sizes. Compared to Example \ref{ex: example 1}, increasing the number of spatial dimensions generally results in larger errors. However, the difference in errors between the two examples remains negligible. Moreover, in this example, the error estimates are still maintained at an optimal order in all three norms. These findings further support the theoretical results achieved in Section \ref{sec: a priori error estimates}.

\begin{table}[!htp]
    \centering
    \resizebox{\textwidth}{!}{
    \begin{minipage}{\textwidth}%
        \caption{Errors in three different norms for various levels of mesh refinement in Example \ref{ex: example 2}.}
        \label{2d example 1}
        \begin{tabular}{|c|c|c|c|c|c|c|}
            \hline
            \rule{0pt}{2.75ex}  
             $h$ & \( \norm{\left(u-u_h\right)\left(\cdot, T\right)}_{\Ls^2\left(\Omega\right)} \) & Order & \( \norm{u-u_h}_{\Ls^2\left(\Omega_T\right)} \) & Order & \( \norm{u-u_h}_{\Hs^1\left(\Omega_T\right)} \) & Order \\[1ex] \hline
            \rule{0pt}{2.75ex}
            \( 2^{-2} \) & \(5.905 \times 10^{-2}\) & -      & \(6.500 \times 10^{-2}\) & -      & \(4.994 \times 10^{-1}\) & -       \\ 
            \( 2^{-3} \) & \(3.607 \times 10^{-2}\) & 0.711  & \(2.182 \times 10^{-2}\) & 1.573  & \(2.094 \times 10^{-1}\) & 1.254   \\ 
            \( 2^{-4} \) & \(1.421 \times 10^{-2}\) & 1.344  & \(5.926 \times 10^{-3}\) & 1.881  & \(9.906 \times 10^{-2}\) & 1.080   \\ 
            \( 2^{-5} \) & \(4.361 \times 10^{-3}\) & 1.704  & \(1.504 \times 10^{-3}\) & 1.978  & \(5.050 \times 10^{-2}\) & 0.972   \\ 
            \( 2^{-6} \) & \(1.171 \times 10^{-3}\) & 1.897  & \(3.765 \times 10^{-4}\) & 1.998  & \(2.561 \times 10^{-2}\) & 0.980   \\ \hline
        \end{tabular}
    \end{minipage}}
\end{table}

\begin{figure}[h!]
    \centering
    \begin{minipage}{0.48\textwidth}
        \centering
        \includegraphics[width=\linewidth]{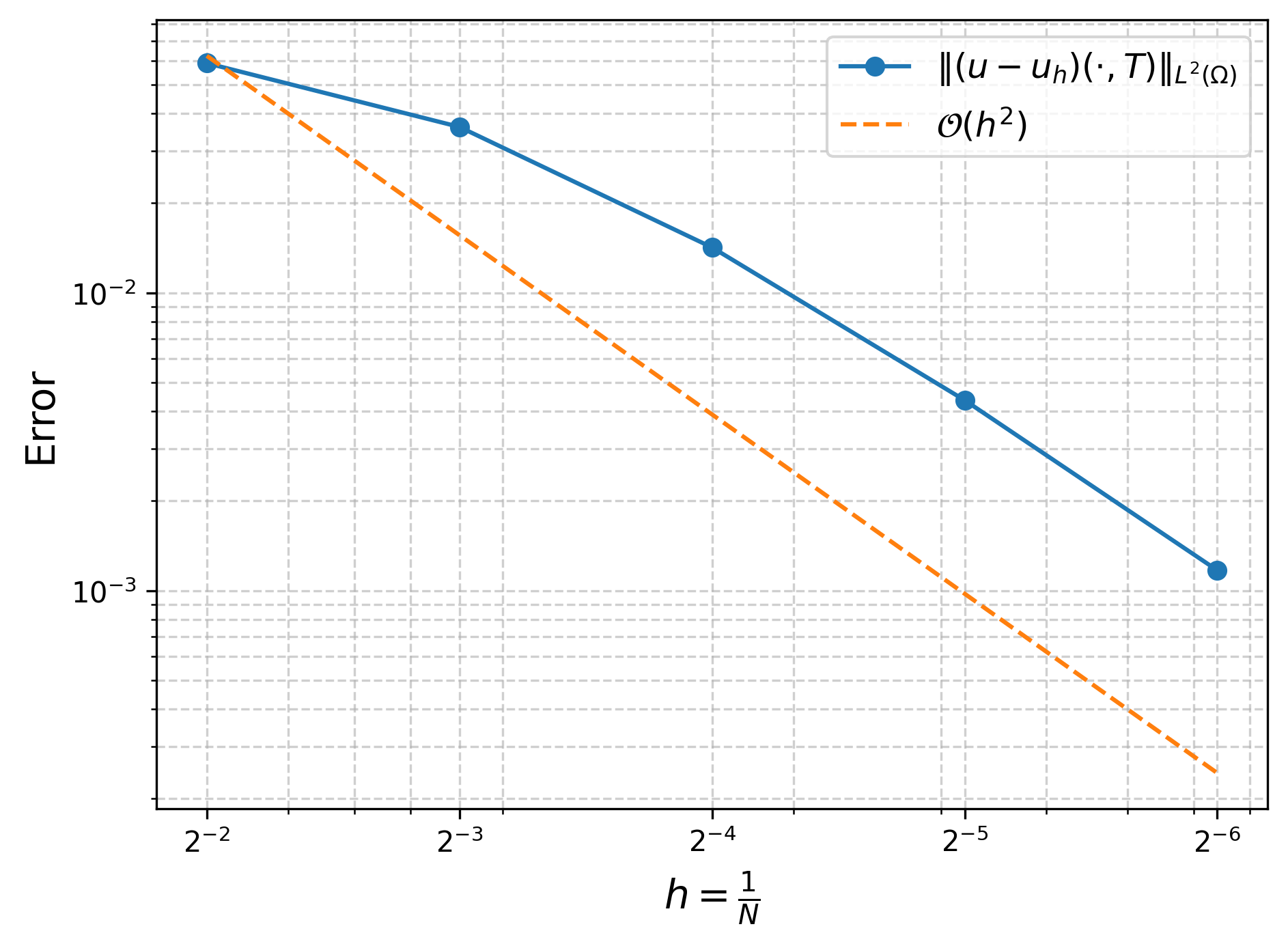}
    \end{minipage}%
    \hfill
    \begin{minipage}{0.48\textwidth}
        \centering
        \includegraphics[width=\linewidth]{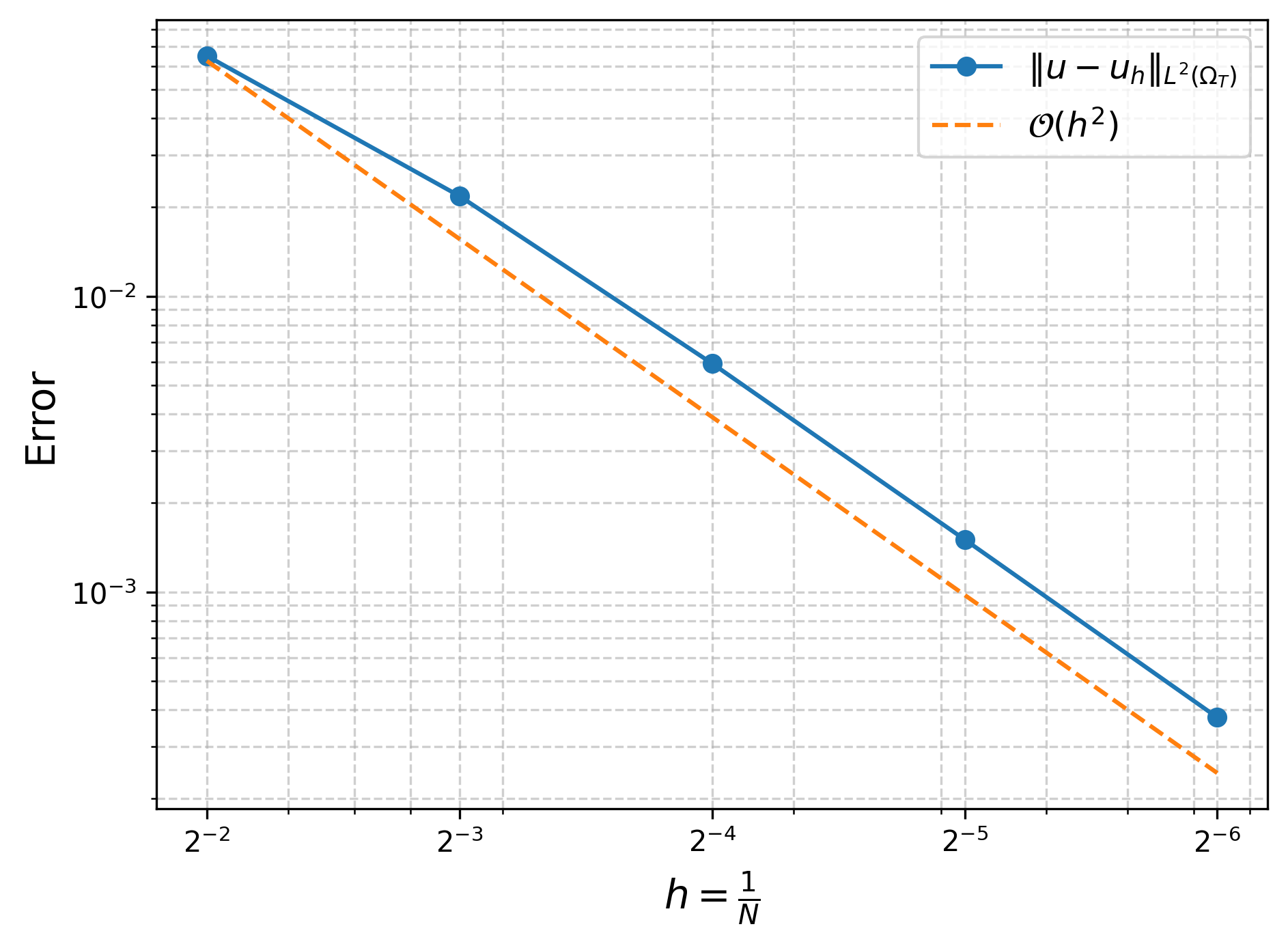}
    \end{minipage}\\[10pt]
    \centering
    \begin{minipage}{0.96\textwidth}
        \centering
        \includegraphics[width=0.5\linewidth]{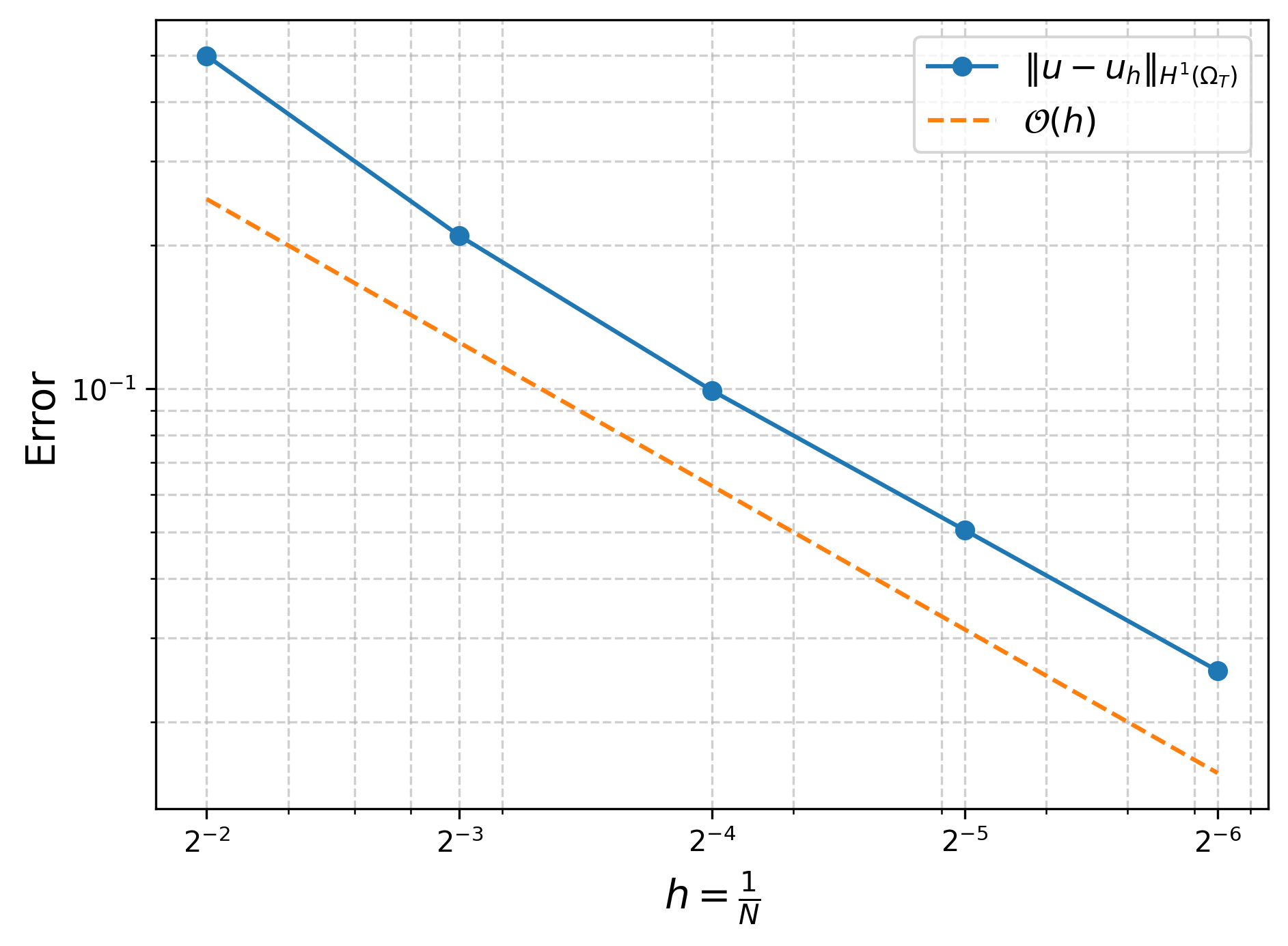}
    \end{minipage}
    \caption{Estimated convergence orders with respect to three different norms in Example \ref{ex: example 2}.}
    \label{2d example 1 convergence}
\end{figure}

\begin{example}
    \label{ex: example 3}
    This example is a modification of \cite[Example 1]{LST+2021}. We again consider $\Omega_T = \left(0,1\right)^3$ and choose the exact solution as
    $$
    u\left(\xb, t\right) = \sin\left(\pi x\right) \sin\left(\pi y\right) \left(-\dfrac{2\pi^2 + 1}{2\pi^2 + 2}t^2 +t\right) \qqqq \xb = \left(x, y\right)^{\top}.
    $$
\end{example}

Table \ref{2d example 2} and Figure \ref{2d example 2 convergence} provide the numerical results along with corresponding illustrations. These results align with those of Example \ref{ex: example 2} and further confirm the convergence behavior described in Corollary \ref{cor: order convergence in 3 norms}. The main distinction from the previous example is a slight reduction in errors at all levels of mesh refinement, particularly in $\norm{\left(u-u_h\right)\left(\cdot, T\right)}_{\Ls^2\left(\Omega\right)}$. This may be attributed to the fact that the previously chosen exact solution satisfied $u\left(\cdot, T\right) = 0$, whereas the current solution does not.

\begin{table}[!htp]
    \centering
    \resizebox{\textwidth}{!}{
    \begin{minipage}{\textwidth}%
        \caption{Errors in three different norms for various levels of mesh refinement in Example \ref{ex: example 3}.}
        \label{2d example 2}
        \begin{tabular}{|c|c|c|c|c|c|c|}
            \hline
            \rule{0pt}{2.75ex}  
             $h$ & \( \norm{\left(u-u_h\right)\left(\cdot, T\right)}_{\Ls^2\left(\Omega\right)} \) & Order & \( \norm{u-u_h}_{\Ls^2\left(\Omega_T\right)} \) & Order & \( \norm{u-u_h}_{\Hs^1\left(\Omega_T\right)} \) & Order \\[1ex] \hline
            \rule{0pt}{2.75ex}
            \( 2^{-2} \) & \(1.007 \times 10^{-2}\) & -      & \(1.697 \times 10^{-2}\) & -      & \(1.236 \times 10^{-1}\) & -       \\ 
            \( 2^{-3} \) & \(7.716 \times 10^{-3}\) & 0.384  & \(5.705 \times 10^{-3}\) & 1.573  & \(5.416 \times 10^{-2}\) & 1.190   \\ 
            \( 2^{-4} \) & \(3.296 \times 10^{-3}\) & 1.227  & \(1.570 \times 10^{-3}\) & 1.861  & \(2.674 \times 10^{-2}\) & 1.018   \\ 
            \( 2^{-5} \) & \(1.048 \times 10^{-3}\) & 1.653  & \(4.026 \times 10^{-4}\) & 1.963  & \(1.385 \times 10^{-2}\) & 0.949   \\ 
            \( 2^{-6} \) & \(2.861 \times 10^{-4}\) & 1.873  & \(1.012 \times 10^{-4}\) & 1.993  & \(7.056 \times 10^{-3}\) & 0.973   \\ \hline
        \end{tabular}
    \end{minipage}}
\end{table}

\begin{figure}[h!]
    \centering
    \begin{minipage}{0.48\textwidth}
        \centering
        \includegraphics[width=\linewidth]{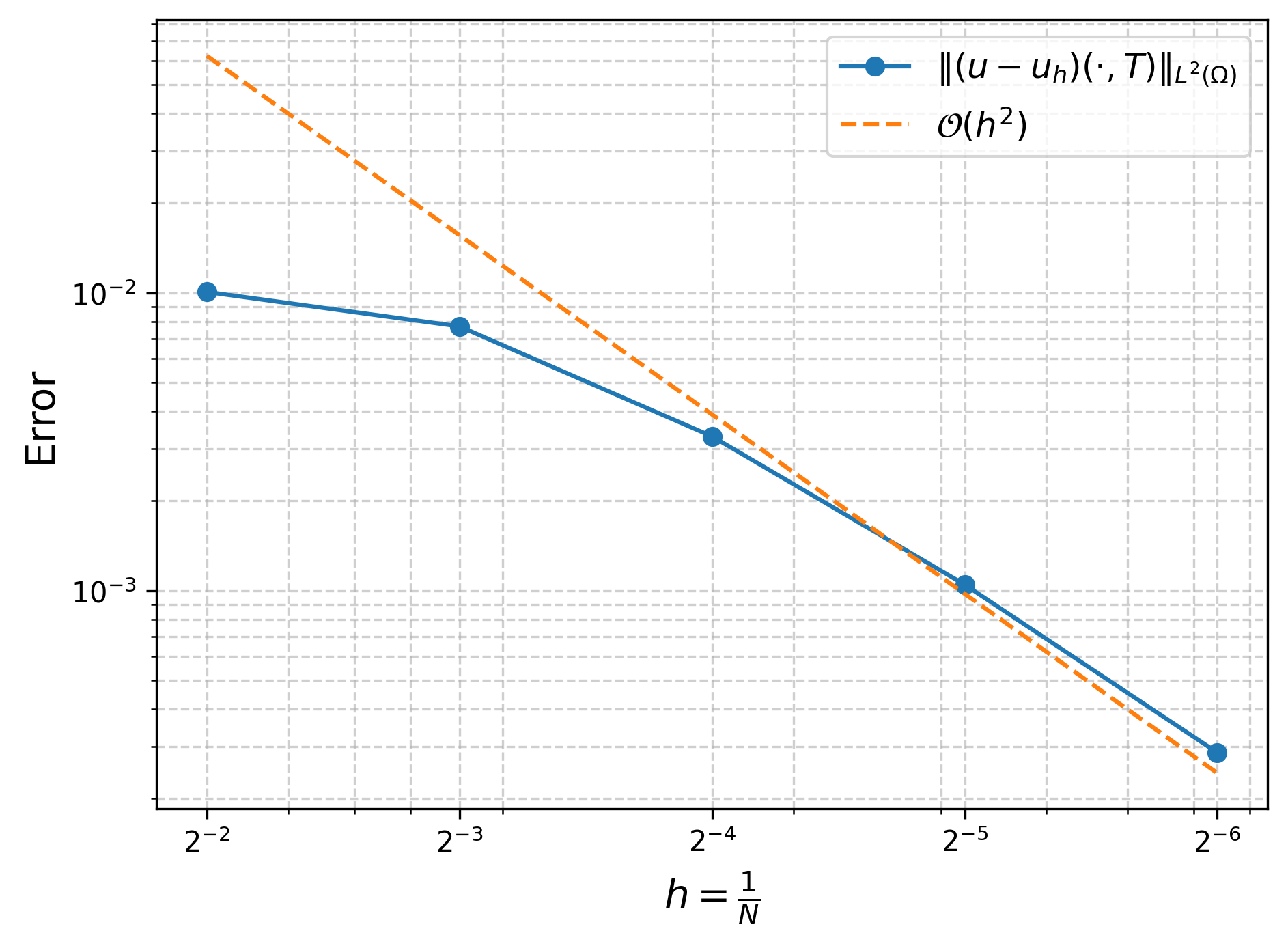}
    \end{minipage}%
    \hfill
    \begin{minipage}{0.48\textwidth}
        \centering
        \includegraphics[width=\linewidth]{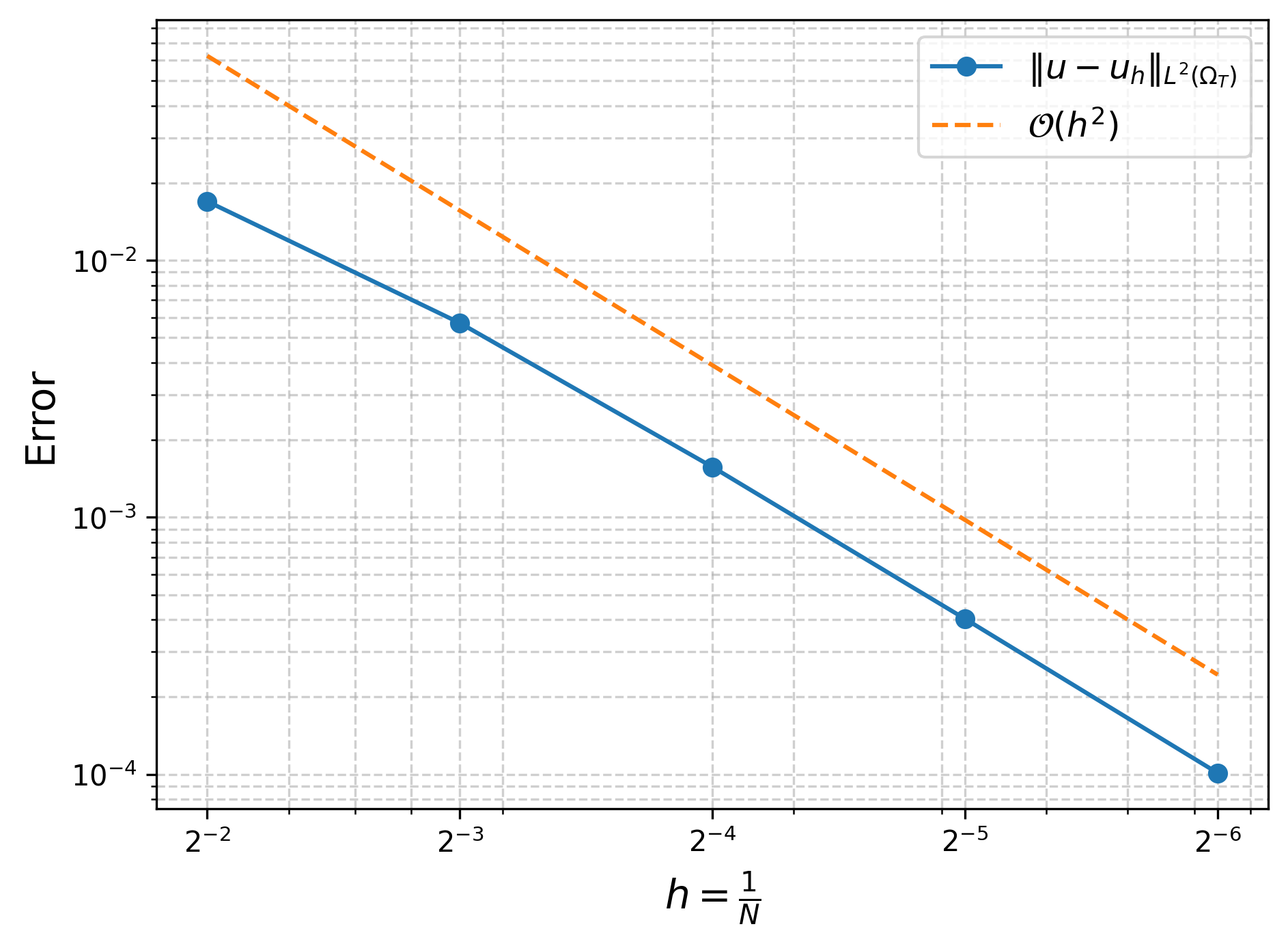}
    \end{minipage}\\[10pt]
    \centering
    \begin{minipage}{0.96\textwidth}
        \centering
        \includegraphics[width=0.5\linewidth]{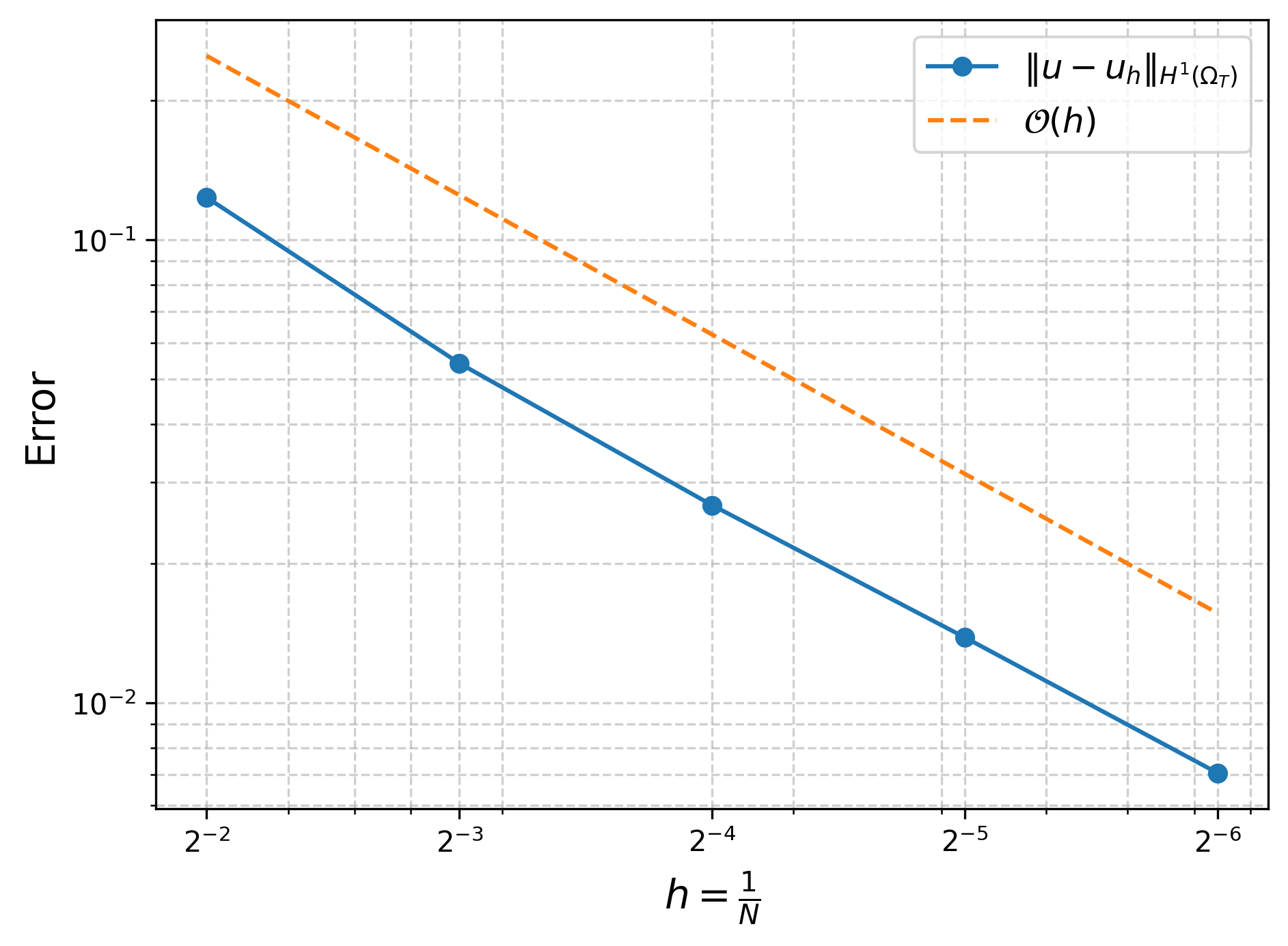}
    \end{minipage}
    \caption{Estimated convergence orders with respect to three different norms in Example \ref{ex: example 3}.}
    \label{2d example 2 convergence}
\end{figure}

\begin{example}
    \label{ex: example 4}
    Finally, we investigate a scenario in which the spatial domain evolves over time. Our main objective is to illustrate the improved error estimates for moving-domain problems. Since these theoretical results are derived using fully unstructured space-time meshes, we expect them to remain valid in this case. We consider the space-time domain $\Omega_T = \Omega\left(t\right)\times \left(0,1\right)$, where
    $$
    \Omega\left(t\right) = \left\{x\in \mathbb{R}\mid -t < x < 0.5 +t\right\}\qqqq t \in \left(0,1\right),
    $$
    and choose the exact solution as $u\left(x,t\right) = \sin\left(\pi x\right) t e^{2t}$. Evidently, this formulation leads to a problem with a non-homogeneous boundary condition.
\end{example}

Figure \ref{1d example 2 plot} presents the discrete solution $u_h$ for $h=2^{-8}$. Table \ref{1d example 2} summarizes the numerical results, showing a systematic reduction in errors with mesh refinement. Figure \ref{1d example 2 convergence} visualizes the convergence behavior, demonstrating the alignment between computed and expected convergence rates. 

These findings suggest that the established error estimates may also hold for moving-domain problems, highlighting the potential for extending the presented theoretical analysis to a broader framework.

\begin{figure}[h!]
    \centering
    \includegraphics[width=0.65\linewidth]{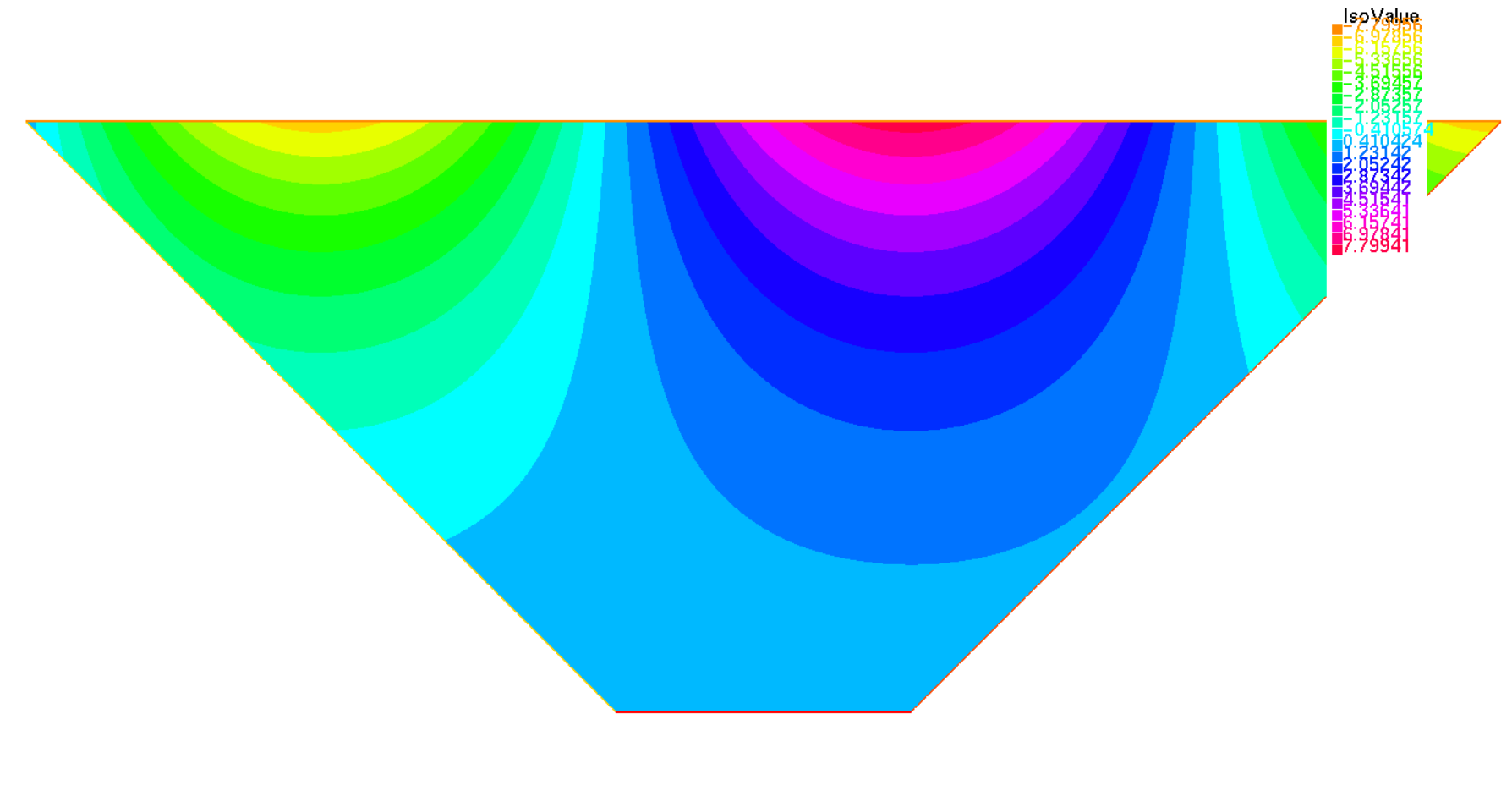}
    \caption{The discrete solution $u_h$ for Example \ref{ex: example 4} with mesh size $h= 2^{-8}$.}
    \label{1d example 2 plot}
\end{figure}

\begin{table}[!htp]
    \centering
    \resizebox{\textwidth}{!}{
    \begin{minipage}{\textwidth}%
        \caption{Errors in three different norms for various levels of mesh refinement in Example \ref{ex: example 4}.}
        \label{1d example 2}
        \begin{tabular}{|c|c|c|c|c|c|c|}
            \hline
            \rule{0pt}{2.75ex}  
             $h$ & \( \norm{\left(u-u_h\right)\left(\cdot, T\right)}_{\Ls^2\left(\Omega\right)} \) & Order & \( \norm{u-u_h}_{\Ls^2\left(\Omega_T\right)} \) & Order & \( \norm{u-u_h}_{\Hs^1\left(\Omega_T\right)} \) & Order \\[1ex] \hline
            \rule{0pt}{2.75ex}
            \(2^{-2}\) & \(7.680 \times 10^{-1}\) & -        & \(2.164 \times 10^{-1}\) & -        & \(2.560 \times 10^{0}\) & -        \\ 
            \(2^{-3}\) & \(2.207 \times 10^{-1}\) & 1.799    & \(4.630 \times 10^{-2}\) & 2.225    & \(1.225 \times 10^{0}\) & 1.064    \\ 
            \(2^{-4}\) & \(6.813 \times 10^{-2}\) & 1.696    & \(1.277 \times 10^{-2}\) & 1.859    & \(5.580 \times 10^{-1}\) & 1.134    \\ 
            \(2^{-5}\) & \(1.787 \times 10^{-2}\) & 1.930    & \(3.431 \times 10^{-3}\) & 1.896    & \(2.827 \times 10^{-1}\) & 0.981    \\ 
            \(2^{-6}\) & \(5.008 \times 10^{-3}\) & 1.836    & \(1.016 \times 10^{-3}\) & 1.756    & \(1.405 \times 10^{-1}\) & 1.009    \\ 
            \(2^{-7}\) & \(1.283 \times 10^{-3}\) & 1.965    & \(2.966 \times 10^{-4}\) & 1.776    & \(7.014 \times 10^{-2}\) & 1.002    \\ 
            \(2^{-8}\) & \(3.440 \times 10^{-4}\) & 1.899    & \(8.480 \times 10^{-5}\) & 1.807    & \(3.332 \times 10^{-2}\) & 1.074    \\ \hline
        \end{tabular}
    \end{minipage}}
\end{table}

\begin{figure}[h!]
    \centering
    \begin{minipage}{0.48\textwidth}
        \centering
        \includegraphics[width=\linewidth]{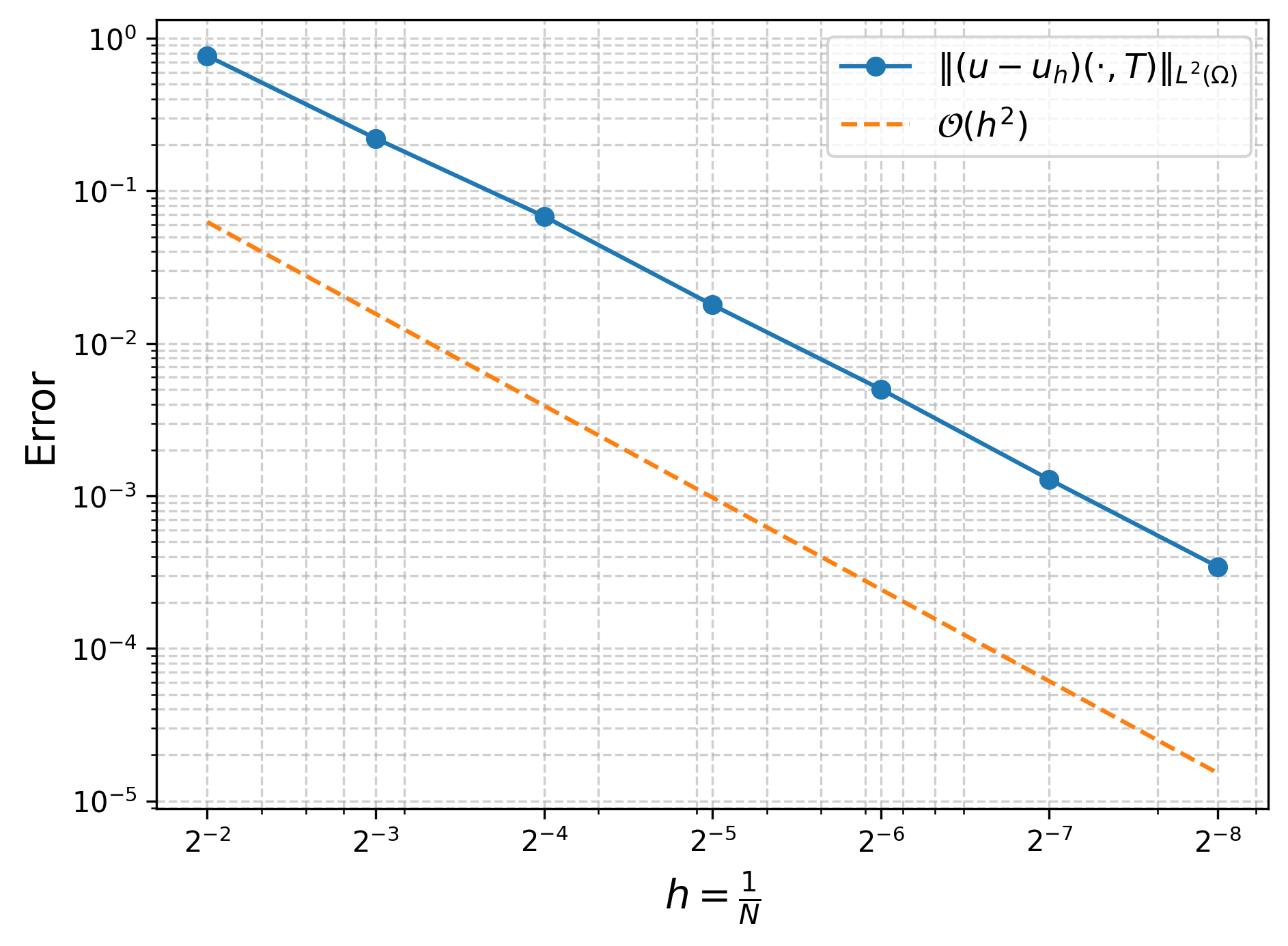}
    \end{minipage}%
    \hfill
    \begin{minipage}{0.48\textwidth}
        \centering
        \includegraphics[width=\linewidth]{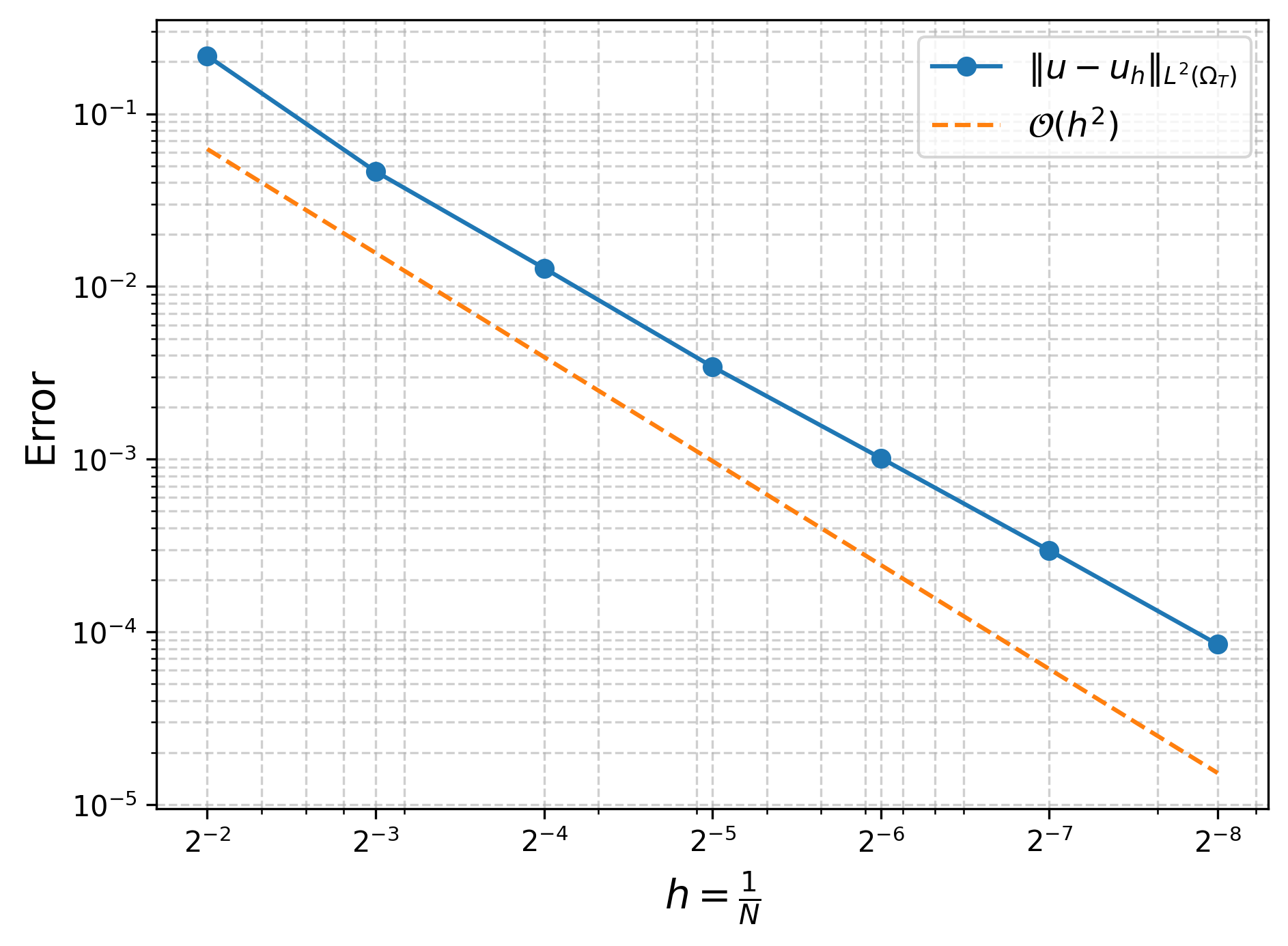}
    \end{minipage}\\[10pt]
    \begin{minipage}{0.48\textwidth}
        \centering
        \includegraphics[width=\linewidth]{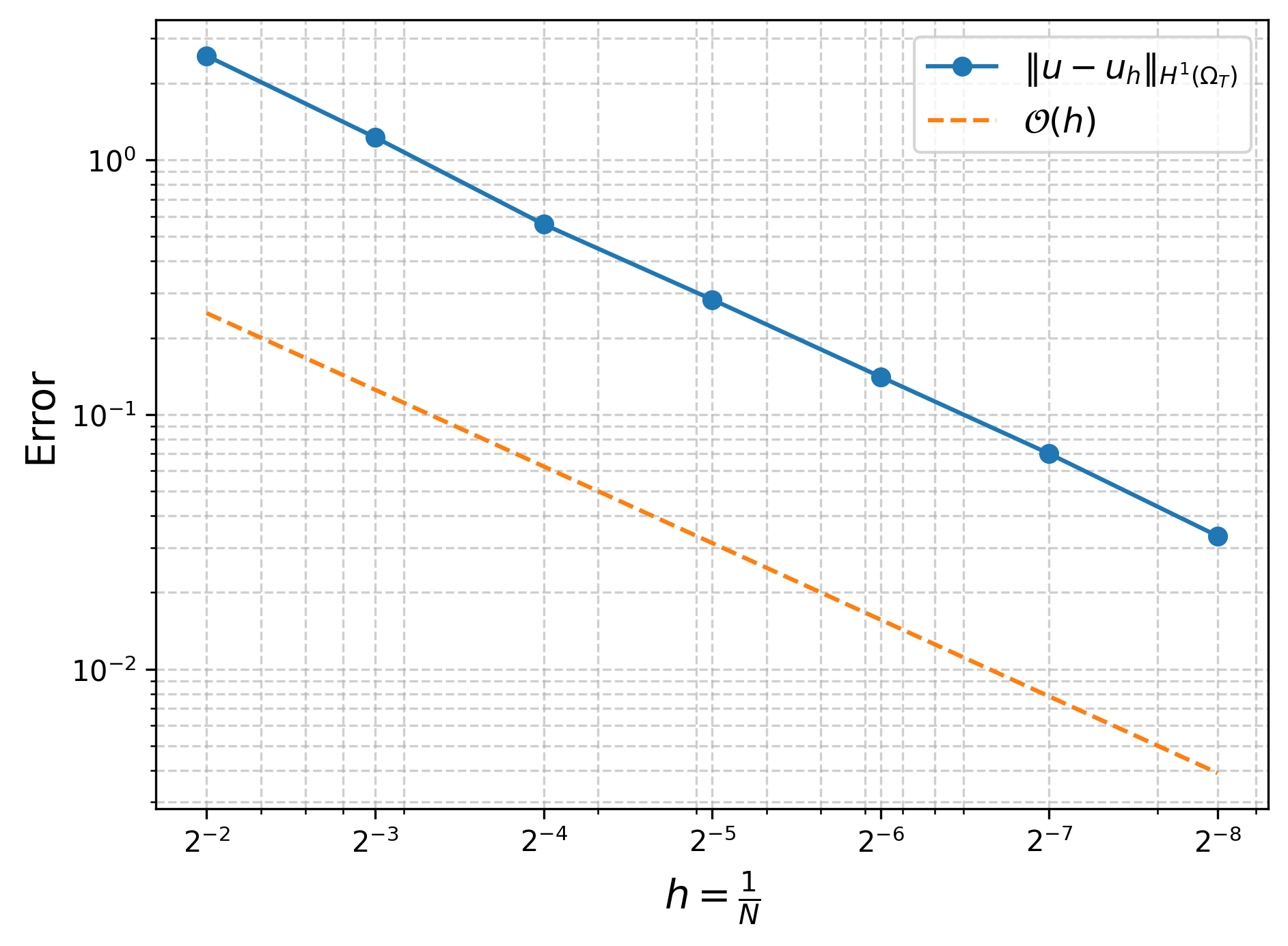}
    \end{minipage}
    \caption{Estimated convergence orders with respect to three different norms in Example \ref{ex: example 4}.}
    \label{1d example 2 convergence}
\end{figure}

\section{Conclusions}

Using duality arguments, we derived new error estimates for the space-time finite element method. Compared to the trial space norm, employing three weaker norms led to higher-order estimates under appropriate conditions. Conversely, for a stronger norm, the convergence order remained consistent with the mesh-dependent norm in \cite{Steinbach2015}. Finally, we provided numerical examples in one and two spatial dimensions to illustrate these findings.

For future work, one possible direction is to investigate superconvergence based on the established negative-order norm error estimate. Another potential direction is to extend the proposed approach to enhance error estimates for other space-time methods, such as isogeometric analysis and the mixed finite element method.

\backmatter




\bibliography{main}


\begin{thebibliography}{18}
\ifx \bisbn   \undefined \def \bisbn  #1{ISBN #1}\fi
\ifx \binits  \undefined \def \binits#1{#1}\fi
\ifx \bauthor  \undefined \def \bauthor#1{#1}\fi
\ifx \batitle  \undefined \def \batitle#1{#1}\fi
\ifx \bjtitle  \undefined \def \bjtitle#1{#1}\fi
\ifx \bvolume  \undefined \def \bvolume#1{\textbf{#1}}\fi
\ifx \byear  \undefined \def \byear#1{#1}\fi
\ifx \bissue  \undefined \def \bissue#1{#1}\fi
\ifx \bfpage  \undefined \def \bfpage#1{#1}\fi
\ifx \blpage  \undefined \def \blpage #1{#1}\fi
\ifx \burl  \undefined \def \burl#1{\textsf{#1}}\fi
\ifx \doiurl  \undefined \def \doiurl#1{\url{https://doi.org/#1}}\fi
\ifx \betal  \undefined \def \betal{\textit{et al.}}\fi
\ifx \binstitute  \undefined \def \binstitute#1{#1}\fi
\ifx \binstitutionaled  \undefined \def \binstitutionaled#1{#1}\fi
\ifx \bctitle  \undefined \def \bctitle#1{#1}\fi
\ifx \beditor  \undefined \def \beditor#1{#1}\fi
\ifx \bpublisher  \undefined \def \bpublisher#1{#1}\fi
\ifx \bbtitle  \undefined \def \bbtitle#1{#1}\fi
\ifx \bedition  \undefined \def \bedition#1{#1}\fi
\ifx \bseriesno  \undefined \def \bseriesno#1{#1}\fi
\ifx \blocation  \undefined \def \blocation#1{#1}\fi
\ifx \bsertitle  \undefined \def \bsertitle#1{#1}\fi
\ifx \bsnm \undefined \def \bsnm#1{#1}\fi
\ifx \bsuffix \undefined \def \bsuffix#1{#1}\fi
\ifx \bparticle \undefined \def \bparticle#1{#1}\fi
\ifx \barticle \undefined \def \barticle#1{#1}\fi
\bibcommenthead
\ifx \bconfdate \undefined \def \bconfdate #1{#1}\fi
\ifx \botherref \undefined \def \botherref #1{#1}\fi
\ifx \url \undefined \def \url#1{\textsf{#1}}\fi
\ifx \bchapter \undefined \def \bchapter#1{#1}\fi
\ifx \bbook \undefined \def \bbook#1{#1}\fi
\ifx \bcomment \undefined \def \bcomment#1{#1}\fi
\ifx \oauthor \undefined \def \oauthor#1{#1}\fi
\ifx \citeauthoryear \undefined \def \citeauthoryear#1{#1}\fi
\ifx \endbibitem  \undefined \def \endbibitem {}\fi
\ifx \bconflocation  \undefined \def \bconflocation#1{#1}\fi
\ifx \arxivurl  \undefined \def \arxivurl#1{\textsf{#1}}\fi
\csname PreBibitemsHook\endcsname

\bibitem[\protect\citeauthoryear{Hughes and Hulbert}{1988}]{HH1988}
\begin{barticle}
\bauthor{\bsnm{Hughes}, \binits{T.J.R.}},
\bauthor{\bsnm{Hulbert}, \binits{G.M.}}:
\batitle{Space-time finite element methods for elastodynamics: formulations and error estimates}.
\bjtitle{Comput. Methods Appl. Mech. Engrg.}
\bvolume{66}(\bissue{3}),
\bfpage{339}--\blpage{363}
(\byear{1988})
\end{barticle}
\endbibitem

\bibitem[\protect\citeauthoryear{Neum{\" u}ller}{2013}]{Neumuller2013}
\begin{bbook}
\bauthor{\bsnm{Neum{\" u}ller}, \binits{M.}}:
\bbtitle{Space–Time Methods: Fast Solvers and Applications}.
\bsertitle{Monographic Series TU Graz: Computation in Engineering and Science},
vol. \bseriesno{20}.
\bpublisher{TU Graz},
\blocation{Graz}
(\byear{2013})
\end{bbook}
\endbibitem

\bibitem[\protect\citeauthoryear{Bank et~al.}{2017}]{BVZ2017}
\begin{barticle}
\bauthor{\bsnm{Bank}, \binits{R.E.}},
\bauthor{\bsnm{Vassilevski}, \binits{P.S.}},
\bauthor{\bsnm{Zikatanov}, \binits{L.T.}}:
\batitle{Arbitrary dimension convection-diffusion schemes for space-time discretizations}.
\bjtitle{J. Comput. Appl. Math.}
\bvolume{310},
\bfpage{19}--\blpage{31}
(\byear{2017})
\end{barticle}
\endbibitem

\bibitem[\protect\citeauthoryear{Langer et~al.}{2021}]{LST+2021}
\begin{barticle}
\bauthor{\bsnm{Langer}, \binits{U.}},
\bauthor{\bsnm{Steinbach}, \binits{O.}},
\bauthor{\bsnm{Tr\"oltzsch}, \binits{F.}},
\bauthor{\bsnm{Yang}, \binits{H.}}:
\batitle{Unstructured space-time finite element methods for optimal control of parabolic equations}.
\bjtitle{SIAM J. Sci. Comput.}
\bvolume{43}(\bissue{2}),
\bfpage{744}--\blpage{771}
(\byear{2021})
\end{barticle}
\endbibitem

\bibitem[\protect\citeauthoryear{Gangl et~al.}{2024}]{GGS2024}
\begin{barticle}
\bauthor{\bsnm{Gangl}, \binits{P.}},
\bauthor{\bsnm{Gobrial}, \binits{M.}},
\bauthor{\bsnm{Steinbach}, \binits{O.}}:
\batitle{A space-time finite element method for the eddy current approximation of rotating electric machines}.
\bjtitle{Comput. Methods Appl. Math.}
(\byear{2024})
\doiurl{10.1515/cmam-2024-0033}
\end{barticle}
\endbibitem

\bibitem[\protect\citeauthoryear{Nguyen et~al.}{2025}]{NLPT2024}
\begin{barticle}
\bauthor{\bsnm{Nguyen}, \binits{Q.H.}},
\bauthor{\bsnm{Le}, \binits{V.C.}},
\bauthor{\bsnm{Hoang}, \binits{P.C.}},
\bauthor{\bsnm{Ta}, \binits{T.T.M.}}:
\batitle{A fitted space-time finite element method for parabolic problems with moving interfaces}.
\bjtitle{Appl. Numer. Math.}
\bvolume{211},
\bfpage{61}--\blpage{77}
(\byear{2025})
\end{barticle}
\endbibitem

\bibitem[\protect\citeauthoryear{Steinbach}{2015}]{Steinbach2015}
\begin{barticle}
\bauthor{\bsnm{Steinbach}, \binits{O.}}:
\batitle{Space-time finite element methods for parabolic problems}.
\bjtitle{Comput. Methods Appl. Math.}
\bvolume{15}(\bissue{4}),
\bfpage{551}--\blpage{566}
(\byear{2015})
\end{barticle}
\endbibitem

\bibitem[\protect\citeauthoryear{Langer et~al.}{2016}]{LMN2016}
\begin{barticle}
\bauthor{\bsnm{Langer}, \binits{U.}},
\bauthor{\bsnm{Moore}, \binits{S.E.}},
\bauthor{\bsnm{Neum\"uller}, \binits{M.}}:
\batitle{Space-time isogeometric analysis of parabolic evolution problems}.
\bjtitle{Comput. Methods Appl. Mech. Engrg.}
\bvolume{306},
\bfpage{342}--\blpage{363}
(\byear{2016})
\end{barticle}
\endbibitem

\bibitem[\protect\citeauthoryear{Moore}{2018}]{Moore2018}
\begin{barticle}
\bauthor{\bsnm{Moore}, \binits{S.E.}}:
\batitle{A stable space-time finite element method for parabolic evolution problems}.
\bjtitle{Calcolo}
\bvolume{55}(\bissue{2}),
\bfpage{18}--\blpage{19}
(\byear{2018})
\end{barticle}
\endbibitem

\bibitem[\protect\citeauthoryear{Stevenson and Westerdiep}{2021}]{SW2021}
\begin{barticle}
\bauthor{\bsnm{Stevenson}, \binits{R.}},
\bauthor{\bsnm{Westerdiep}, \binits{J.}}:
\batitle{Stability of {G}alerkin discretizations of a mixed space-time variational formulation of parabolic evolution equations}.
\bjtitle{IMA J. Numer. Anal.}
\bvolume{41}(\bissue{1}),
\bfpage{28}--\blpage{47}
(\byear{2021})
\end{barticle}
\endbibitem

\bibitem[\protect\citeauthoryear{Thom{\'e}e}{2006}]{Thomee2006}
\begin{bbook}
\bauthor{\bsnm{Thom{\'e}e}, \binits{V.}}:
\bbtitle{Galerkin Finite Element Methods for Parabolic Problems}.
\bsertitle{Springer Series in Computational Mathematics},
vol. \bseriesno{25}.
\bpublisher{Springer},
\blocation{Berlin}
(\byear{2006})
\end{bbook}
\endbibitem

\bibitem[\protect\citeauthoryear{Nitsche}{1968}]{Nitsche1968}
\begin{barticle}
\bauthor{\bsnm{Nitsche}, \binits{J.}}:
\batitle{Ein {K}riterium f\"ur die {Q}uasi-{O}ptimalit\"at des {R}itzschen {V}erfahrens}.
\bjtitle{Numer. Math.}
\bvolume{11},
\bfpage{346}--\blpage{348}
(\byear{1968})
\end{barticle}
\endbibitem

\bibitem[\protect\citeauthoryear{Ern and Guermond}{2004}]{EG2004}
\begin{bbook}
\bauthor{\bsnm{Ern}, \binits{A.}},
\bauthor{\bsnm{Guermond}, \binits{J.-L.}}:
\bbtitle{Theory and Practice of Finite Elements}.
\bsertitle{Applied Mathematical Sciences},
vol. \bseriesno{159}.
\bpublisher{Springer},
\blocation{New York}
(\byear{2004})
\end{bbook}
\endbibitem

\bibitem[\protect\citeauthoryear{Ern and Guermond}{2021a}]{Ern2021c}
\begin{bbook}
\bauthor{\bsnm{Ern}, \binits{A.}},
\bauthor{\bsnm{Guermond}, \binits{J.-L.}}:
\bbtitle{Finite Elements III: First-Order and Time-Dependent PDEs}.
\bsertitle{Texts in Applied Mathematics},
vol. \bseriesno{74}.
\bpublisher{Springer},
\blocation{Switzerland}
(\byear{2021})
\end{bbook}
\endbibitem

\bibitem[\protect\citeauthoryear{Ern and Guermond}{2021b}]{Ern2021}
\begin{bbook}
\bauthor{\bsnm{Ern}, \binits{A.}},
\bauthor{\bsnm{Guermond}, \binits{J.-L.}}:
\bbtitle{Finite Elements I: Approximation and Interpolation}.
\bsertitle{Texts in Applied Mathematics},
vol. \bseriesno{72}.
\bpublisher{Springer},
\blocation{Switzerland}
(\byear{2021})
\end{bbook}
\endbibitem

\bibitem[\protect\citeauthoryear{Manohar and Sinha}{2020}]{MS2020}
\begin{barticle}
\bauthor{\bsnm{Manohar}, \binits{R.}},
\bauthor{\bsnm{Sinha}, \binits{R.K.}}:
\batitle{Space-time a posteriori error analysis of finite element approximation for parabolic optimal control problems: a reconstruction approach}.
\bjtitle{Optimal Control Appl. Methods}
\bvolume{41}(\bissue{5}),
\bfpage{1543}--\blpage{1567}
(\byear{2020})
\end{barticle}
\endbibitem

\bibitem[\protect\citeauthoryear{Langer et~al.}{2021}]{LST+2021b}
\begin{barticle}
\bauthor{\bsnm{Langer}, \binits{U.}},
\bauthor{\bsnm{Steinbach}, \binits{O.}},
\bauthor{\bsnm{Tr\"oltzsch}, \binits{F.}},
\bauthor{\bsnm{Yang}, \binits{H.}}:
\batitle{Space-time finite element discretization of parabolic optimal control problems with energy regularization}.
\bjtitle{SIAM J. Numer. Anal.}
\bvolume{59}(\bissue{2}),
\bfpage{675}--\blpage{695}
(\byear{2021})
\end{barticle}
\endbibitem

\bibitem[\protect\citeauthoryear{Hecht}{2012}]{Hecht2012}
\begin{botherref}
\oauthor{\bsnm{Hecht}, \binits{F.}}:
{New development in FreeFEM++}.
J. Numer. Math.
\textbf{20}(3-4)
(2012)
\end{botherref}
\endbibitem

\end{thebibliography}

\end{document}